 \newtheorem{theorem}{Theorem}[section]
 \newtheorem{Lem}[theorem]{Lemma}
\newcommand{\ba}{\begin{array}}
\newcommand{\ea}{\end{array}}
\newcommand{\beq}{\begin{equation}}
\newcommand{\eeq}{\end{equation}}
\newcommand\Let{\mathrel{\mathop:\!\!=}}
 \numberwithin{equation}{section}
\begin{document}

\title{A characterization of connected self-affine fractals arising from collinear digits}

\author{King-Shun Leung} \address{Department of Mathematics and Information Technology, The Education University of Hong Kong, Hong Kong}
\email{ksleung@eduhk.hk}

\author{Jun Jason Luo}\address{College of Mathematics and Statistics, Chongqing University, Chongqing 401331, China}\email{jun.luo@cqu.edu.cn}

%
%
%
%
\thanks{The research is supported by the grants of the Education University of Hong Kong (RG16/2014-2015R, RG94/2015-16, MIT/SRG10/15-16),  the NNSF of China (No.11301322), the Fundamental and Frontier Research Project of Chongqing (No.cstc2015jcyjA00035)}
\keywords{connectedness, self-affine fractals, collinear digits, radix expansions, neighbors.}
\subjclass[2010]{Primary: 28A80; Secondary: 15A03, 11A63.}
%

\begin{abstract}
Let $A$ be an expanding integer matrix with  characteristic polynomial $f(x)=x^{2}+px+q$, and let $\mathcal{D}=\{0,1,\dots,|q|-2,|q|+m\}\mathbf{v}$ be a collinear digit set where $m\geqslant 0, {\mathbf v}\in {\mathbb Z}^2$. It is well known that there exists a unique self-affine fractal $T$ satisfying $AT=T+\mathcal{D}$. In this paper, we give a complete characterization on the connected $T$. That generalizes the previous result of $|q|=3$.
\end{abstract}

\maketitle

\section{Introduction}
Given an $n\times n$ integer matrix $A$, we assume it is expanding, i.e., its eigenvalues all have moduli strictly larger than $1$.  Let
${\mathcal{D}}=\{{\mathbf d}_1,\dots, {\mathbf d}_k\}\subset {\mathbb{R}}^n$ be a \emph{digit set}. It is well known that there exists a unique attractor $T\Let T(A,{\mathcal{D}})$ \cite{LaWa} satisfying:
\begin{equation}\label{eq-radixexp}
T= A^{-1}(T +
{\mathcal{D}})=\left\{\sum_{i=1}^{\infty}A^{-i}{\mathbf d}_{j_i}: {\mathbf d}_{j_i}\in
{\mathcal{D}}\right\}.
\end{equation}
We often call $T$ a \emph{self-affine fractal}. If moreover, $|\det(A)|=k$ and the interior of $T$ is nonempty,  then $T$ can tile the whole space ${\mathbb{R}}^n$ by translations. We call such $T$  a \emph{self-affine tile}.

The fundamental theory and applications of self-affine fractals/tiles have been extensively studied in the literature (\cite{K},\cite{LaWa},\cite{LaWa2},\cite{LaWa3},\cite{GrHa},\cite{HaSaVe},\cite{BaGe}). In the studies,  people found that, given a matrix $A$, the structures of digit set $\mathcal D$ strongly influence the topological properties of $T(A,{\mathcal{D}})$, such as connectedness and disk-likeness (see \cite{AkGj},\cite{AT},\cite{BaWa},\cite{DeLa},\cite{Ha},\cite{Ki},\cite{KiLa},\cite{KiLaRa},\cite{LeLa},\cite{LeLu},\cite{LeLu2},\cite{LLT},\cite{LLX}). Among all the researches, the collinear digit sets perhaps attracted the most attentions. Say $\mathcal{D}$ is collinear if $\mathcal{D}=\{d_1, \dots, d_k\}{\mathbf v}$ for some vector ${\mathbf v}\in {\mathbb R}^n$ and $d_1<d_2<\cdots<d_k$. If moreover,  $d_{i+1}-d_i=1$ for all $i$, ${\mathcal D}$ is said to be {\it consecutive collinear (CC)}. If $d_{i+1}-d_i=1$ for all $i$ except one $i_0$ where $d_{i_0+1}-d_{i_0}>1$, then ${\mathcal D}$ is said to have a {\it jump}. The study on the connected self-affine fractals/tiles arising from  CC digit sets has been an interesting topic.  Hacon {\it et al.} \cite{HaSaVe} first proved that a self-affine tile $T$ is always pathwise connected when $k=2$. Lau and his coworkers (\cite{HeKiLa},\cite{KiLa},\cite{KiLaRa},\cite{LeLa},\cite{LeLu3}) developed this direction and  systematically studied  the topology of self-affine tiles for any $k$. The connectedness of self-affine fractals with CC digit sets was also concerned in \cite{LeLu},\cite{LLX}.

However, there are very limited results on the collinear digit set $\mathcal D$ with jumps. In \cite{LeLu}, the authors made a first attempt in this area,  especially we proved that

\begin{theorem} \label{th-LL}
Let $A$ be an expanding integer matrix with characteristic polynomial $f(x)=x^2+px\pm 3$,  and let ${\mathcal{D}}=\{0,1,b\}{\mathbf v}$  where $2 \leqslant b\in {\mathbb{Z}}$ such that $\{{\mathbf v}, A{\mathbf v}\}$ is linearly independent. Then we have

(i) when $b=2$, $T$ is always a connected self-affine tile;

(ii) when $b\geqslant 4$, $T$ is always a disconnected self-affine fractal;

(iii) when $b=3$, $T$ is  connected if and only if $(p,q)\in\{(\pm1,-3), (\pm2,3),(\pm3,3)\}$.
\end{theorem}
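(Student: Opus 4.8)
The plan is to normalise the matrix, reduce connectedness to a finite question about neighbours, and then treat the three regimes $b=2$, $b\geqslant 4$ and $b=3$ in turn.

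Since $\{\mathbf v,A\mathbf v\}$ is a basis of $\mathbb R^{2}$, after a linear change of coordinates we may take $\mathbf v=e_{1}=(1,0)^{T}$ and, by Cayley--Hamilton, $A=\left(\begin{smallmatrix}0&-q\\1&-p\end{smallmatrix}\right)$ with $q=\pm 3$; then $\mathcal D=\{0,1,b\}e_{1}$ and $\Delta:=\mathcal D-\mathcal D=\{0,\pm1,\pm(b-1),\pm b\}e_{1}$. By the standard graph criterion for connectedness of attractors (Hata), $T$ is connected if and only if the graph on $\mathcal D$ in which two digits are joined when the corresponding pieces of $T$ intersect is connected; for the three collinear digits $0,\mathbf v,b\mathbf v$ this amounts to asking that at least two of the vectors $\mathbf v$, $(b-1)\mathbf v$, $b\mathbf v$ lie in $T-T$. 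I would also use the usual description of the neighbour set: $(T-T)\cap\mathbb Z^{2}$ is the largest finite $S\subseteq\mathbb Z^{2}$ with $0\in S$ and $AS\subseteq S+\Delta$, computable by starting from the integer points of a ball containing $T-T$ and iteratively deleting the $x$ with $Ax\notin S+\Delta$.

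For (i), when $b=2$ the set $\mathcal D=\{0,1,2\}e_{1}$ is consecutive collinear and $\#\mathcal D=3=|\det A|$ with $\mathcal D$ a complete set of coset representatives of $\mathbb Z^{2}/A\mathbb Z^{2}$; hence $T$ is a self-affine tile, and connectedness follows from the known results (Hacon--Saldanha--Veerman for two digits and the work of Kirat--Lau) that a self-affine tile with a consecutive collinear digit set is pathwise connected — alternatively one checks directly that $\mathbf v\in T-T$ for each admissible $(p,q)$ via a short admissible cycle in the neighbour recursion.

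Part (ii) is the heart of the matter: for $b\geqslant 4$ I would prove that neither $(b-1)\mathbf v$ nor $b\mathbf v$ lies in $T-T$, so that $b\mathbf v$ is an isolated vertex of the contact graph. Applying $A$ once, $b\mathbf v\in T-T$ forces $(-\delta,b)^{T}\in T-T$ and $(b-1)\mathbf v\in T-T$ forces $(-\delta,b-1)^{T}\in T-T$ for some $\delta\in\{0,\pm1,\pm(b-1),\pm b\}$; and a point of $T-T$ written as $\sum_{i\geqslant1}\delta_{i}A^{-i}e_{1}$ (with $\delta_{i}\in\{0,\pm1,\pm(b-1),\pm b\}$) has second coordinate $\sum_{i\geqslant1}\delta_{i}c_{i}$, where $c_{0}=0$, $c_{1}=-1/q$ and $qc_{i}+pc_{i-1}+c_{i-2}=0$. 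Whenever $\sum_{i\geqslant1}|c_{i}|<1-\tfrac1b$ — which holds for all admissible pairs except $(p,q)=(\pm3,3)$ and $(\pm1,-3)$ — every point of $T-T$ has second coordinate of absolute value $<b-1$, and both exclusions are immediate. For the remaining ``long'' pairs a coarse estimate is useless (there $T-T$ stretches far in the $\mathbf v$-direction), and I would instead track the neighbour recursion in detail, using that the tail of $(c_{i})$ decays geometrically to force all but finitely many $\delta_{i}$ to the extreme values $\pm b$, and then checking that the accompanying first coordinate never equals one of the finitely many admissible values $-\delta$. I expect this uniform treatment of the long cases to be the main obstacle.

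For (iii) with $b=3$ there are only finitely many admissible pairs: $x^{2}+px+3$ is expanding exactly for $p\in\{0,\pm1,\pm2,\pm3\}$ (then having complex roots of modulus $\sqrt3$), and $x^{2}+px-3$ exactly for $p\in\{0,\pm1\}$. For each of these ten pairs (the work for $\pm p$ being parallel) I would run the neighbour iteration above to compute $(T-T)\cap\mathbb Z^{2}$ and read off whether at least two of $\mathbf v,2\mathbf v,3\mathbf v$ are neighbours; this singles out exactly $(p,q)\in\{(\pm1,-3),(\pm2,3),(\pm3,3)\}$ as the connected cases, the pairs $(0,\pm3)$ and $(\pm1,3)$ yielding a disconnected $T$.
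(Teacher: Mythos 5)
You have reconstructed the right machinery --- this theorem is actually quoted from \cite{LeLu} and not reproved here, but the surrounding paper's method for the generalization (Section 3) is exactly your setup: write $l=\sum_i b_iA^{-i}\mathbf v=\gamma\mathbf v+\delta A\mathbf v$, bound $|\gamma|,|\delta|$ by $\max_i|b_i|$ times $\tilde\alpha=\sum|\alpha_i|$, $\tilde\beta=\sum|\beta_i|$, and iterate $l\mapsto Al-b_1\mathbf v$ to force some $b_i$ out of $\triangle D$. Part (i) is correctly disposed of by the Kirat--Lau result for consecutive collinear digit tiles, and your identification of the ten admissible pairs and of the pairs where the crude bound $b\tilde\beta<b-1$ fails (namely $(\pm3,3)$ and $(\pm1,-3)$; note that $(\pm2,3)$ with $b=4$ squeaks through only with $\tilde\beta<0.73$, which needs roughly ten explicit terms of the recursion plus a tail estimate, so you should flag that computation) is accurate.

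However, there are two genuine gaps. First, in (ii) the cases $(p,q)=(\pm3,3)$ and $(\pm1,-3)$ with $b\geqslant4$ are precisely where the theorem is delicate --- for $b=3$ these same pairs give \emph{connected} $T$ --- so the argument must use $b\geqslant4$ sharply, and your plan (``force all but finitely many $\delta_i$ to the extreme values $\pm b$, then check the first coordinate'') is not a proof: you have not shown the forcing actually terminates in a contradiction, and you yourself call it the main obstacle. The way this is actually closed (both in \cite{LeLu} and in the analogous Parts I--V here) is finite and concrete: from $k\mathbf v\in T-T$ with $k\in\{b-1,b\}$ one applies the neighbor-generating formula \eqref{neighbor-generating} two or three times, obtaining successively $-(kp+b_1)A\mathbf v-(kq+b_2)\mathbf v\in T-T$ etc., and the bounds $|\gamma_n|\leqslant b\tilde\alpha$, $|\delta_n|\leqslant b\tilde\beta$ pin each $b_i$ into an interval disjoint from $\triangle D=\{0,\pm1,\pm(b-1),\pm b\}$ after finitely many steps; you need to actually run this for the two exceptional pairs. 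Second, in (iii) you reduce everything to computing $(T-T)\cap L$ but do not perform the computation; in particular the \emph{positive} half (connectedness for $(\pm1,-3),(\pm2,3),(\pm3,3)$) cannot come out of a pruning algorithm alone --- one must exhibit explicit eventually periodic radix expansions of $\mathbf v$ and of $2\mathbf v$ or $3\mathbf v$ with digits in $\triangle D$, e.g.\ via identities of the type $A+(p-1)I=-(p-1)(A+I)^{-1}$ used in Parts III--V, and none are supplied. As written the proposal is a correct strategy with the decisive cases of (ii) and all of (iii) left unexecuted.
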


For an expanding $2\times 2$ integer matrix $A$, it is known by \cite{BaGe} that the characteristic polynomial of $A$ is
given by
\begin{align} \label{eq.expanding}
f(x)=x^2+px+q, ~\text{with}~ |p|\leqslant q, ~\text{if}~ q\geqslant 2;\quad |p|\leqslant |q+2|, ~\text{if}~ q\leqslant -2.
\end{align}
In the paper,  we will give a complete characterization on the connectedness of $T$ arising from a collinear digit set with a jump. As for $|q|=2$ and ${\mathcal D}=\{{\mathbf 0}, {\mathbf v}\}$, $T$ is always a connected self-affine tile (\cite{HaSaVe} or \cite{KiLa}). So we will exclude this trivial case.

\begin{theorem}\label{mainthm}
Let $A$ be an expanding integer matrix with  characteristic polynomial $f(x)=x^2+px+q$ where $|q|\geqslant 3$, let $\mathcal{D}=\{0,1,\dots,|q|-2,|q|+m\}\mathbf{v}$, where $0\leqslant m\in{\mathbb Z}$ and $\mathbf{v}\in\mathbb{Z}^{2}$ such that $\{{\mathbf v}, A{\mathbf v}\}$ is linearly independent. Then

(i)  when $m\geqslant 1$, $T$ is always disconnected;

(ii) when $m=0$, $T$ is connected if and only if
$$(p,q)\in  \{ (p,q)\in\mathbb{Z}^{2}:2|p|=|q+2|\} \cup\{(\pm1,-3), (\pm2,3),(\pm3,3),(\pm4,4)\}.$$ (See Figure \ref{fig1})
\end{theorem}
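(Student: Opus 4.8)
The plan is to convert connectedness into a reachability question for a finite graph attached to $(p,q)$, and then to analyse that graph. I would start from the standard neighbour criterion: $T=T(A,\mathcal{D})$ is connected iff the graph on $\mathcal{D}$ with an edge between $\mathbf{d},\mathbf{d}'$ whenever $(T+\mathbf{d})\cap(T+\mathbf{d}')\neq\emptyset$, i.e.\ $\mathbf{d}-\mathbf{d}'\in T-T$, is connected. Since $T-T=T(A,\mathcal{D}-\mathcal{D})$ and $\mathcal{D}$ is collinear, this is governed by $\mathcal{S}:=\{\,n\in\mathbb{Z}_{>0}:n\mathbf{v}\in T-T\,\}$: the block $\{0,\dots,|q|-2\}\mathbf{v}$ is chained by single steps of $\mathbf{v}$ once $1\in\mathcal{S}$, and the jump digit $(|q|+m)\mathbf{v}$ attaches to the block iff $\mathcal{S}\cap\{m+2,\dots,|q|+m\}\neq\emptyset$; in particular $T$ connected forces the latter. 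Passing to the basis $\{\mathbf{v},A\mathbf{v}\}$, in which $A$ is the companion matrix of $f$, and using that $x\in T(A,\mathcal{D}-\mathcal{D})$ iff some forward orbit $x_{k}=Ax_{k-1}-\mathbf{e}_{k}$ (with $\mathbf{e}_{k}\in\mathcal{D}-\mathcal{D}$) can be kept bounded, I would rephrase ``$n\mathbf{v}\in T-T$'' as: there is a bounded integer sequence $(w_{k})_{k\geqslant0}$ with $w_{0}=0$, $w_{1}=n$ and $w_{k+2}+pw_{k+1}+qw_{k}\in-\Lambda$ for all $k$, where $\Lambda$ is the set of coefficient-differences of the digits; when $m=0$ this is just $|w_{k+2}+pw_{k+1}+qw_{k}|\leqslant|q|$.

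With this dictionary the positive half of (ii) is short. The sequence $(0,1,0,0,\dots)$ is always admissible for $m=0$ — the two nontrivial constraints reduce to $|p|\leqslant|q|$ and $|q|\leqslant|q|$, both guaranteed by \eqref{eq.expanding} — so $1\in\mathcal{S}$, and $T$ is connected iff $\mathcal{S}\cap\{2,\dots,|q|\}\neq\emptyset$. On the locus $2|p|=|q+2|$ a single short eventually-periodic sequence with $w_{1}=2$ works — an eventually-constant $w_{k}\equiv2$, an eventually-alternating $w_{k}=2(-1)^{k+1}$, or (when $q=4$) $w_{k}=4(-1)^{k+1}$, depending on the signs — the relevant constraints collapsing exactly to $2|p|=|q+2|$, so $2\mathbf{v}\in T-T$. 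The four sporadic pairs are cleared by exhibiting such a witness by hand (for instance $(0,2,-1,-1,2,-1,-1,\dots)$ puts $2\mathbf{v}\in T-T$ for $(p,q)=(2,3)$, and $(0,2,-4,4,-4,\dots)$ does so for $(p,q)=(4,4)$); these are consistent with Theorem~\ref{th-LL} when $|q|=3$.

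The substance of the proof is the converse: for $m\geqslant1$ (all admissible $(p,q)$) and for $m=0$ with $(p,q)$ off the locus $2|p|=|q+2|$ and not sporadic, one must show $\mathcal{S}$ misses the relevant window, i.e.\ no admissible bounded integer sequence begins $0,n$. I would first establish the a priori bound that a bounded admissible orbit stays in the compact set $T-T$, hence each coordinate is bounded by an explicit constant $C(p,q)$; in particular $n\mathbf{v}\in T-T$ requires $n\leqslant C$, and the admissible $w_{2}$ (constrained by $|w_{2}+pn|\leqslant|q|$) must also lie in $[-C,C]$, which already kills large $|p|$ outright. On the finite box $[-C,C]^{2}$ the admissible transitions $(w_{k},w_{k+1})\mapsto(w_{k+1},w_{k+2})$ form a finite directed graph, and $n\mathbf{v}\in T-T$ holds iff a suitable state with first coordinate $n$ lies on an infinite forward path, i.e.\ can reach a cycle (equivalently, $n\mathbf{v}$ lies in the maximal forward-invariant ``trapped set''). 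Thus the theorem reduces to: for which $(p,q,m)$ can a state with first coordinate $n$ — where $m+2\leqslant n\leqslant|q|+m$, or $2\leqslant n\leqslant|q|$ if $m=0$ — reach a cycle in this radix graph? I would organise this by the sign of $q$, the size of $|p|$ relative to $|q|$, and whether the roots of $f$ are real or complex, using the reverse triangle inequality $|w_{k+2}|\geqslant|pw_{k+1}+qw_{k}|-|q|$ to show that, absent a precise sign pattern producing cancellation in $pw_{k+1}+qw_{k}$, the orbit leaves the box; the only sustainable cancellation patterns turn out to force $2|p|=|q+2|$, the case $m\geqslant1$ starts from $n\geqslant3$ which is already too large, and the residue is the finite sporadic list.

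The step I expect to be the main obstacle is making this last analysis uniform in $|q|$ (and in $m$): the constant $C$ and hence the radix graph grow with $|q|$, so one needs a monotonicity/scaling argument showing that after one or two steps any bounded orbit with $|w_{1}|\geqslant2$ is pushed out of the box unless $(p,q)$ sits on the resonance line — so that $2|p|=|q+2|$ emerges once and for all — leaving only a bounded set of small $(p,q)$ to be inspected directly. Once that uniform escape mechanism is in place, the remaining ingredients (the reductions, the explicit witnesses, and the estimates for large $|q|$) are routine.
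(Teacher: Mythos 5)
Your framework is sound and is in fact the same as the paper's: your bounded integer sequence $(w_k)$ with $w_{k+2}+pw_{k+1}+qw_k\in-\Lambda$ is exactly the $\delta$-coordinate of the orbit produced by the neighbour-generating formula \eqref{neighbor-generating}, and your constant $C(p,q)$ is the bound \eqref{eq-gamma-delta} coming from $\tilde{\alpha},\tilde{\beta}$. The positive half of (ii) is correctly and completely handled: your eventually-periodic witnesses are equivalent to the paper's algebraic identities such as $A+(p-1)I=-(p-1)(A+I)^{-1}$ (for $q=2p-2$) and $A+(p+1)I=(p+1)(A-I)^{-1}$ (for $q=-(2p+2)$), and the sequence $(0,1,0,0,\dots)$ does give $\mathbf{v}\in T-T$ when $m=0$.

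The gap is the entire negative direction. You reduce it to showing that no admissible orbit starting from $(0,n)$, with $n$ in the relevant window, survives inside the box, and then you assert that ``the only sustainable cancellation patterns turn out to force $2|p|=|q+2|$,'' that for $m\geqslant1$ the starting value $n\geqslant3$ is ``already too large,'' and that a monotonicity/scaling argument will make the escape uniform in $|q|$ --- none of which is proved, and which you yourself flag as the main obstacle. This is not a routine verification; it is where essentially all of the work lies. The argument has to be split according to the sign of $q$, the sign of $\Delta=p^2-4q$, and the relative sizes of $p$, $p^2$ and $q$; in several regimes the general bounds $\tilde{\beta}\leqslant 1/(q-|p|+1)$ or $1/(|q|-|p|-1)$ are too weak, and one must enumerate finite exceptional lists of pairs (e.g.\ $(3,6),(4,7),(4,8),(5,9),\dots,(7,14)$, or $(3,4),(4,5),(2,4),(3,5),(5,7)$) and compute sharp numerical upper bounds for $\tilde{\beta}$ in each, sometimes iterating \eqref{neighbor-generating} two or three times before the forced digit $b_1$, $b_2$ or $b_3$ falls outside $\triangle D$. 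No single uniform escape mechanism of the kind you hope for is available (the paper does not find one), and the borderline cases show why a soft sign-pattern argument cannot decide the dichotomy: $q=p=4$ is connected while $q=p\geqslant5$ is not, and $(3,4)$, $(4,6)$ are connected while the neighbouring $(4,5)$, $(2,4)$, $(3,5)$, $(5,7)$ are not --- the outcome genuinely depends on the precise values of $\tilde{\alpha}$ and $\tilde{\beta}$, not merely on which side of a resonance line $(p,q)$ lies. Until these estimates are carried out case by case, what you have is a correct reduction plus an unproved claim.
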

\begin{figure}[h]
  \centering
   \includegraphics[width=6cm]{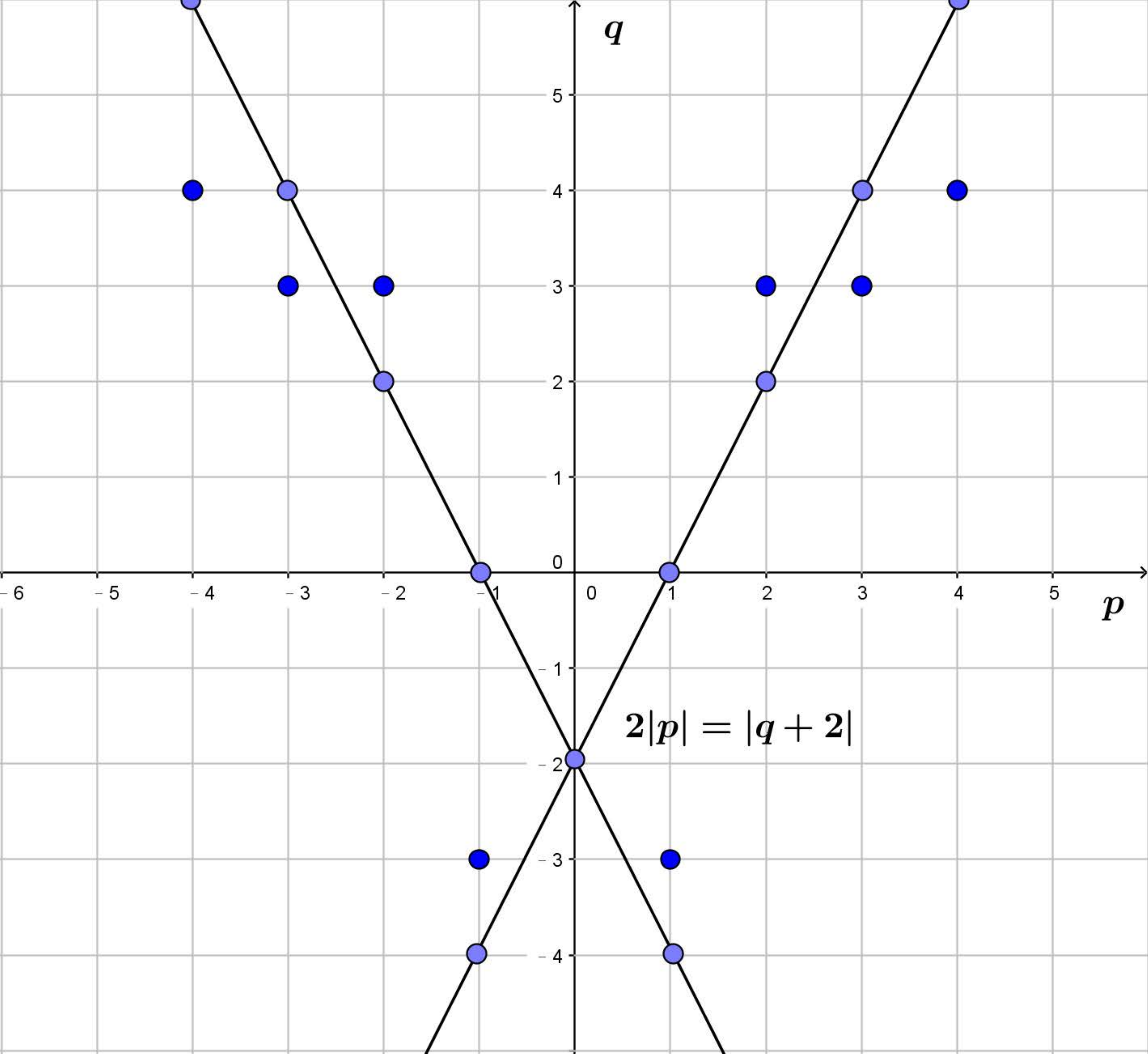}
 \caption{The domain of $(p,q)$ for connected self-affine fractals.}\label{fig1}
\end{figure}

The proof is elementary although it contains lots of calculations and multiple discussions. The main idea is to fully use the radix expansion like \eqref{eq-radixexp} and Cayley-Hamilton theorem (see basic tools in Section 2). Moreover, we remark  that when $m=-1$, ${\mathcal D}$ becomes a CC digit set and $T$ is always connected \cite{KiLa}. We also mention that the theorem still holds if the jump occurs elsewhere. Actually the proof is the same irrespective of the location of the jump occurs for the disconnected cases as it does not involve finding the exact radix expansion. For the connected cases when $m=0$, ${\mathcal D}-{\mathcal D}$ is unchanged wherever the jump occurs.

For the organization of the paper, we provide some useful lemmas in Section 2 and prove Theorem \ref{mainthm} by five parts in Section 3.

\section{Basic lemmas}

In this section, we prepare several basic results that will be used frequently in the next section. Let $(A, {\mathcal D})$ be given as in the assumption of Theorem \ref{mainthm}. Denoted by $D=\{0, 1, \dots, |q|-2, |q|+m\}$ and $\triangle D=D-D$, then ${\mathcal D}= D{\mathbf v}$ and $\triangle{\mathcal D}=\triangle D {\mathbf v}$.

First we provide a simple but useful criterion for connectedness of $T(A,{\mathcal D})$ (we refer to \cite{Ha} or \cite{KiLa} for its general version and proof).

\begin{Lem}\label{e-connected prop}
$T:=T(A,{\mathcal D})$ is connected if and only if $\mathbf{v}\in T-T$ and $(m+c)\mathbf{v}\in T-T$ for some $c\in\{2,3,\dots, |q|\}$.
\end{Lem}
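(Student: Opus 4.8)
The plan is to use the standard connectedness criterion for self-affine sets: $T$ is connected if and only if for every pair of digits $\mathbf{d}, \mathbf{d}' \in \mathcal{D}$ the corresponding pieces $A^{-1}(T+\mathbf{d})$ and $A^{-1}(T+\mathbf{d}')$ can be linked by a chain of pieces with consecutive members overlapping, which in turn reduces (since connectedness of $T$ forces and is forced by the union of pieces being connected) to the requirement that the "difference graph" on $\mathcal{D}$ generated by the edges $\{\mathbf{d},\mathbf{d}'\}$ with $T-T \ni \mathbf{d}-\mathbf{d}'$ wait, more precisely with $(T+\mathbf{d})\cap(T+\mathbf{d}')\neq\emptyset$, is connected. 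This is the general version in \cite{Ha},\cite{KiLa}; I would first recall that $(T+\mathbf{d})\cap(T+\mathbf{d}')\neq\emptyset$ is equivalent to $\mathbf{d}-\mathbf{d}'\in(T-T)$, because $T-T = \{\sum_{i\ge1}A^{-i}(\mathbf{d}_{j_i}-\mathbf{d}_{k_i})\}$ absorbs such differences appropriately. So connectedness of $T$ is equivalent to connectedness of the graph $G$ on vertex set $D = \{0,1,\dots,|q|-2,|q|+m\}$ with an edge between $i$ and $j$ whenever $(i-j)\mathbf{v}\in T-T$.

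Next I would exploit the structure of $D$. The block $\{0,1,\dots,|q|-2\}$ is a set of $|q|-1$ consecutive integers, so within this block consecutive vertices $i,i+1$ are joined precisely when $\mathbf{v}\in T-T$ (all these edges carry the same difference $\mathbf{v}$). Hence the sub-block $\{0,\dots,|q|-2\}$ is connected in $G$ if and only if $\mathbf{v}\in T-T$; and if $\mathbf{v}\notin T-T$ then not even $0$ and $1$ are joined, so $G$ is disconnected and $T$ is disconnected. This establishes the necessity of $\mathbf{v}\in T-T$ and reduces the problem, under the assumption $\mathbf{v}\in T-T$, to deciding when the isolated extra vertex $|q|+m$ gets attached to the already-connected block. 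The vertex $|q|+m$ is joined to some $j\in\{0,\dots,|q|-2\}$ exactly when $(|q|+m-j)\mathbf{v}\in T-T$; writing $c = |q|-j$ this ranges over $c\in\{2,3,\dots,|q|\}$, and the attaching difference is $(m+c)\mathbf{v}$. Therefore $G$ is connected iff $\mathbf{v}\in T-T$ and $(m+c)\mathbf{v}\in T-T$ for some $c\in\{2,\dots,|q|\}$, which is exactly the claim. I should also note $\mathbf{v}\in T-T$ alone already gives, via $c=|q|$ wait no — one must check the "some $c$" clause is genuinely needed and not implied; indeed if only $\mathbf{v}\in T-T$ holds the block is connected but $|q|+m$ may still be isolated, so both conditions are required.

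The main obstacle is the careful justification of the equivalence $(T+\mathbf{d})\cap(T+\mathbf{d}')\neq\emptyset \Longleftrightarrow \mathbf{d}-\mathbf{d}'\in T-T$ together with the passage from "pieces overlap pairwise along a connected graph" to "$T$ connected"; both are standard but rely on $T$ being a compact set satisfying $AT = T+\mathcal{D}$ with $A$ expanding, and on an iteration argument showing that overlap at one level propagates. Since the excerpt explicitly permits citing \cite{Ha} or \cite{KiLa} for the general version, I would simply invoke that and devote the actual write-up to the elementary graph-theoretic reduction on the specific digit set $D$ described above, which is short and self-contained.
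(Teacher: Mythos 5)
Your overall route---invoke the general criterion of \cite{Ha},\cite{KiLa} that $T$ is connected iff the graph $G$ on $D$ with an edge $\{i,j\}$ whenever $(i-j)\mathbf{v}\in T-T$ is connected, together with the equivalence $(T+\mathbf{d})\cap(T+\mathbf{d}')\ne\emptyset \Leftrightarrow \mathbf{d}-\mathbf{d}'\in T-T$---is the same as the paper's, which simply cites those references for the general version and does not write out the specialization. The sufficiency direction of your graph reduction is fine, as is the observation that any edge incident to the vertex $|q|+m$ has difference $(m+c)\mathbf{v}$ with $c=|q|-j\in\{2,\dots,|q|\}$.

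The gap is in the necessity of $\mathbf{v}\in T-T$. From ``$\mathbf{v}\notin T-T$'' you conclude ``not even $0$ and $1$ are joined, so $G$ is disconnected,'' but the absence of the edge $\{0,1\}$ does not disconnect $G$: the vertices $0$ and $1$ (and other consecutive pairs in the block) could still be linked by a path using edges of larger difference, either inside the block (differences $2,\dots,|q|-2$) or through the jump vertex $|q|+m$. For instance, when $|q|=3$ and $D=\{0,1,3+m\}$, if both $(m+2)\mathbf{v}$ and $(m+3)\mathbf{v}$ lie in $T-T$ then $G$ is connected via the path $0$--$(3+m)$--$1$ regardless of whether $\mathbf{v}\in T-T$; similarly, for $|q|\geqslant 5$ the memberships $2\mathbf{v},3\mathbf{v}\in T-T$ already join $0$ and $1$. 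For the same reason your claim that the sub-block $\{0,\dots,|q|-2\}$ is connected in $G$ \emph{only if} $\mathbf{v}\in T-T$ is unjustified. Thus the graph criterion by itself gives only ``$T$ connected $\Rightarrow$ $(m+c)\mathbf{v}\in T-T$ for some $c\in\{2,\dots,|q|\}$'' plus a disjunction of other edge configurations; to reach the lemma in the stated form you need an additional argument about $T-T$ (not a purely graph-theoretic one) ruling out, or reducing to $\mathbf{v}\in T-T$, those configurations in which consecutive digits are linked only through longer differences. Note that the paper itself only ever uses the two unproblematic implications (sufficiency in the connected cases, and the isolation of the jump vertex in the disconnected cases), but since your write-up asserts the full equivalence, this step must be supplied or the statement weakened accordingly.
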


Let $\Delta=p^2-4q$ be the discriminant of the polynomial $f(x)=x^2+px+q$, and define $\alpha_i,\beta_i$ by
$$
A^{-i}\mathbf{v}=\alpha_i\mathbf{v}+\beta_iA\mathbf{v}, \quad i=1,2,\dots
$$
From the Cayley-Hamilton theorem $f(A)=A^2+pA+qI=0$, where $I$ is the identity matrix, the following lemma is immediate.

\begin{Lem} \cite{Le} \label{evaluation}
Let $\alpha_i,\beta_i$ be defined as the above. Then $q\alpha_{i+2}+p\alpha_{i+1}+\alpha_i=0$ and $q\beta_{i+2}+p\beta_{i+1}+\beta_i=0$, i.e.,
$$\left[
        \begin{array}{rr}
          \alpha_{i+1} \\
          \alpha_{i+2} \\
    \end{array}
    \right]=\left[
        \begin{array}{rrrr}
          0  & 1 \\
           -1/q &  -p/q   \\
    \end{array}
    \right]^i \left[
        \begin{array}{rr}
          \alpha_1 \\
          \alpha_2 \\
    \end{array}
    \right]; \quad \left[
        \begin{array}{rr}
          \beta_{i+1} \\
          \beta_{i+2} \\
    \end{array}
    \right]=\left[
        \begin{array}{rrrr}
          0  & 1 \\
           -1/q &  -p/q   \\
    \end{array}
    \right]^i \left[
        \begin{array}{rr}
          \beta_1 \\
          \beta_2 \\
    \end{array}
    \right]
$$
and $\alpha_1=-p/q, \alpha_2=(p^2-q)/q^2; \beta_1=-1/q, \beta_2=p/q^2$. Moreover for $\Delta\ne 0$, we have $$\alpha_i=\frac{q(y_1^{i+1}-y_2^{i+1})}{\Delta^{1/2}} \quad\text{and}\quad \beta_i=\frac{-(y_1^i-y_2^i)}{\Delta^{1/2}},$$
where $y_1=\frac{-p+\Delta^{1/2}}{2q}$ and $y_2=\frac{-p-\Delta^{1/2}}{2q}$ are the two roots of $qx^2+px+1=0$.
\end{Lem}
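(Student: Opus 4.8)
The plan is to treat $(\alpha_i)$ and $(\beta_i)$ purely as coefficient sequences in the basis $\{\mathbf v, A\mathbf v\}$ and to extract every assertion from the Cayley--Hamilton relation. First I would rewrite $A^2+pA+qI=0$ as $A^{-1}=-\tfrac1q(A+pI)$, which is legitimate since $q=\det A\neq0$ for an expanding matrix. Applying this to $\mathbf v$ gives $A^{-1}\mathbf v=-\tfrac pq\mathbf v-\tfrac1q A\mathbf v$; comparing with the defining expansion $A^{-1}\mathbf v=\alpha_1\mathbf v+\beta_1 A\mathbf v$ and invoking the standing hypothesis that $\{\mathbf v, A\mathbf v\}$ is linearly independent, I read off $\alpha_1=-p/q$ and $\beta_1=-1/q$.

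Next I would apply $A^{-1}$ to $A^{-i}\mathbf v=\alpha_i\mathbf v+\beta_i A\mathbf v$. Using $A^{-1}(A\mathbf v)=\mathbf v$ together with the formula for $A^{-1}\mathbf v$ above yields $A^{-(i+1)}\mathbf v=\bigl(-\tfrac pq\alpha_i+\beta_i\bigr)\mathbf v-\tfrac1q\alpha_i\,A\mathbf v$. Linear independence again lets me equate coefficients, producing the coupled first-order system $\alpha_{i+1}=-\tfrac pq\alpha_i+\beta_i$ and $\beta_{i+1}=-\tfrac1q\alpha_i$. Eliminating $\beta$ between these two identities (via $\beta_i=\alpha_{i+1}+\tfrac pq\alpha_i$ and an index shift of $\beta_{i+1}=-\tfrac1q\alpha_i$) gives $q\alpha_{i+2}+p\alpha_{i+1}+\alpha_i=0$, and the identical recursion for $\beta$ follows symmetrically by eliminating $\alpha$ through $\alpha_i=-q\beta_{i+1}$. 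The stated matrix identities are then just the companion-matrix rewriting of $\alpha_{i+2}=-\tfrac pq\alpha_{i+1}-\tfrac1q\alpha_i$, iterated from $i=1$ by a one-line induction; and feeding $\alpha_1,\beta_1$ back into the first-order system immediately produces $\alpha_2=(p^2-q)/q^2$ and $\beta_2=p/q^2$.

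For the closed form when $\Delta\neq0$, I would observe that the characteristic polynomial of the common recursion is exactly $qx^2+px+1$, whose two distinct roots are $y_1,y_2$; hence every solution has the form $C_1y_1^i+C_2y_2^i$, and a solution is pinned down by its first two terms. Rather than solving for the constants, the cleanest route is to check directly that the proposed $\alpha_i=q(y_1^{i+1}-y_2^{i+1})/\Delta^{1/2}$ and $\beta_i=-(y_1^i-y_2^i)/\Delta^{1/2}$ satisfy the recursion (automatic, since each $y_j^i$ does) and match $\alpha_1,\beta_1$, using $y_1-y_2=\Delta^{1/2}/q$ and $y_1+y_2=-p/q$ to simplify; uniqueness then identifies them with the sequences defined geometrically.

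I do not anticipate a genuine obstacle: the lemma is a bookkeeping exercise in second-order linear recurrences. The only points demanding care are (a) the repeated use of the linear independence of $\{\mathbf v, A\mathbf v\}$ to justify reading off coefficients uniquely, which is precisely why that hypothesis appears, and (b) the elementary factorizations such as $y_1^2-y_2^2=(y_1-y_2)(y_1+y_2)$ needed to confirm that the closed forms reproduce the correct initial data.
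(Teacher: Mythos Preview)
Your proposal is correct and follows exactly the approach the paper indicates: the paper does not spell out a proof but merely states that the lemma is ``immediate'' from the Cayley--Hamilton relation $A^2+pA+qI=0$ and cites \cite{Le}. Your argument is precisely the natural expansion of that remark---rewriting $A^{-1}=-\tfrac1q(A+pI)$, reading off the coupled first-order system in the basis $\{\mathbf v,A\mathbf v\}$, eliminating to obtain the second-order recursion, and then identifying the closed form via the roots of $qx^2+px+1$---so there is nothing to compare beyond noting that you have supplied the details the paper omits.
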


Write
\begin{equation*}
\tilde{\alpha}\Let \sum_{i=1}^{\infty}|\alpha_i|,\quad \tilde{\beta}\Let \sum_{i=1}^{\infty}|\beta_i|.
\end{equation*}
Then the $\tilde{\alpha}, \tilde{\beta}$ are finite numbers as $|y_1|<1, |y_2|<1$. The following are their values or estimates in detail.

\begin{Lem}\cite{LLX}\label{lem-LiuLuoXie}
If $\Delta=p^2-4q\geqslant 0$. Then
\begin{equation*}
\tilde{\alpha}= \left\{
\begin{array}{ll}
\frac{|p|-1}{q-|p|+1} &\quad  q>0 \\ \\
\frac{|p|+1}{|q|-|p|-1} & \quad  q<0;
\end{array}
\right. \qquad
\tilde{\beta}= \left\{
\begin{array}{ll}
\frac{1}{q-|p|+1} &\quad q>0 \\ \\
\frac{1}{|q|-|p|-1} &  \quad  q<0.
\end{array}
\right.
\end{equation*}
\end{Lem}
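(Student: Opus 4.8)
The plan is to evaluate $\tilde{\alpha}$ and $\tilde{\beta}$ directly from the closed forms $\alpha_i=q(y_1^{i+1}-y_2^{i+1})/\Delta^{1/2}$ and $\beta_i=-(y_1^i-y_2^i)/\Delta^{1/2}$ of Lemma~\ref{evaluation}, turning each sum into a combination of geometric series in $y_1,y_2$. Since those formulas presuppose $\Delta\neq 0$, I would first dispose of the boundary case $\Delta=0$: then $p^2=4q$ (so $q>0$) and $y_1=y_2=-p/(2q)=:y$ is a double root, whence $\alpha_i\to(i+1)y^i$ and $\beta_i\to -iy^{i-1}/q$ as limits of the displayed quotients, so that $\tilde{\alpha}=\sum_{i\geqslant1}(i+1)|y|^i$ and $\tilde{\beta}=\tfrac{1}{|q|}\sum_{i\geqslant1}i|y|^{i-1}$ are ordinary weighted geometric series; summing these and substituting $|y|=|p|/(2q)$ (note $|y|<1$ since $A$ is expanding forces $|p|\geqslant 4$ here) reproduces $\tfrac{|p|-1}{q-|p|+1}$ and $\tfrac{1}{q-|p|+1}$ — alternatively one may simply invoke continuity of both sides in $p$. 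For $\Delta>0$ I note that replacing $p$ by $-p$ sends $\alpha_i\mapsto(-1)^i\alpha_i$ and $\beta_i\mapsto(-1)^{i-1}\beta_i$ (immediate from the recursion and the initial values in Lemma~\ref{evaluation}), so $\tilde{\alpha},\tilde{\beta}$ depend only on $|p|$ and I may assume $p\geqslant 1$. Throughout I use the identities $y_1+y_2=-p/q$, $y_1y_2=1/q$, $y_1-y_2=\Delta^{1/2}/q$ and the fact that $|y_1|,|y_2|<1$.

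In the case $q>0$, we have $y_1y_2=1/q>0$ and $y_1+y_2=-p/q<0$, so both roots lie in $(-1,0)$; writing $y_1=-a$, $y_2=-b$ with $0<a<b<1$, one gets $a+b=p/q$, $ab=1/q$, $b-a=\Delta^{1/2}/q$. Then $|\beta_i|=|a^i-b^i|/\Delta^{1/2}=(b^i-a^i)/\Delta^{1/2}$ and
\[
\tilde{\beta}=\frac{1}{\Delta^{1/2}}\Bigl(\frac{b}{1-b}-\frac{a}{1-a}\Bigr)=\frac{1}{\Delta^{1/2}}\cdot\frac{b-a}{(1-a)(1-b)}=\frac{1}{\Delta^{1/2}}\cdot\frac{\Delta^{1/2}/q}{(q-p+1)/q}=\frac{1}{q-|p|+1},
\]
since $(1-a)(1-b)=1-(a+b)+ab=(q-p+1)/q$. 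The same computation with $|\alpha_i|=q(b^{i+1}-a^{i+1})/\Delta^{1/2}$ and the factorization $b^2(1-a)-a^2(1-b)=(b-a)(a+b-ab)$ gives $\tilde{\alpha}=\frac{p-1}{q-p+1}=\frac{|p|-1}{q-|p|+1}$.

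In the case $q<0$, now $\Delta=p^2-4q>0$ automatically, and $y_1y_2=1/q<0$, so the roots have opposite signs; writing the negative one as $-a$ and the positive one as $b$ (both $>0$), one finds $a+b=\Delta^{1/2}/|q|$, $ab=1/|q|$, $b-a=|p|/|q|$ (so $b\geqslant a$). The new feature is that $|y_1^i-y_2^i|$ equals $a^i+b^i$ for odd $i$ and $b^i-a^i$ for even $i$; summing the resulting mixed series yields $\sum_{i\geqslant1}|y_1^i-y_2^i|=\frac{b}{1-b}+\frac{a}{1+a}=\frac{a+b}{(1-b)(1+a)}$ with $(1-b)(1+a)=1-(b-a)-ab=(|q|-|p|-1)/|q|$, whence $\tilde{\beta}=\frac{1}{|q|-|p|-1}$; the analogous computation starting at index $2$ (using $b-a+ab=(|p|+1)/|q|$) gives $\tilde{\alpha}=\frac{|p|+1}{|q|-|p|-1}$. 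I would close by noting that all denominators are genuinely positive: the expanding condition \eqref{eq.expanding} reads $|p|\leqslant q$ when $q>0$ and $|p|\leqslant|q|-2$ when $q<0$, so $q-|p|+1\geqslant 1$ and $|q|-|p|-1\geqslant 1$ respectively. The only real obstacle is clerical bookkeeping — keeping straight the signs of $y_1^i,y_2^i$ across $q\gtrless 0$ and across the parity of $i$, so that the absolute values collapse into clean (possibly alternating) geometric series; once organized as above, every sum telescopes after substituting the three symmetric-function identities.
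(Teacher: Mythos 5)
Your computation is correct, and it is worth noting that the paper itself offers no proof of this lemma at all: it is quoted from \cite{LLX}, and the only related argument in the text is the later derivation (in Part I, case $q\geqslant p+3$) of the weaker \emph{inequalities} $(|q|-|p|-1)\tilde{\beta}\leqslant 1$ and the analogous bound for $\tilde{\alpha}$, obtained by summing the recursion $q\beta_{i+2}+p\beta_{i+1}+\beta_i=0$ term by term. Your route is the natural complementary one: use the Binet-type closed forms of Lemma \ref{evaluation}, observe that when $\Delta\geqslant0$ and $q>0$ both roots $y_1,y_2$ of $qx^2+px+1$ lie in $(-1,0)$ (so $|\beta_i|=(b^i-a^i)/\Delta^{1/2}$ collapses to a difference of geometric series), while for $q<0$ the roots have opposite signs (so the absolute values split by parity of $i$), and then telescope using $y_1+y_2=-p/q$, $y_1y_2=1/q$, $y_1-y_2=\Delta^{1/2}/q$. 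I checked the key identities — $(1-a)(1-b)=(q-|p|+1)/q$, $a+b-ab=(|p|-1)/q$ for $q>0$, and $(1-b)(1+a)=(|q|-|p|-1)/|q|$, $b-a+ab=(|p|+1)/|q|$ for $q<0$ — and they do yield exactly the stated values; the double-root case $\Delta=0$ (then necessarily $q>0$, $|p|\geqslant4$ even) is also handled correctly via $\alpha_i=(i+1)y^i$, $\beta_i=-iy^{i-1}/q$ with $|y|=2/|p|<1$. The recursion-summing method of the paper gives the same equalities if one tracks the sign pattern of $\alpha_i,\beta_i$ (alternating when $q>0$, $\Delta\geqslant0$; constant for $\beta_i$ when $q<0$), so neither approach is stronger, but yours is self-contained and makes the role of the expanding hypothesis transparent ($|y_1|,|y_2|<1$ for convergence, and \eqref{eq.expanding} forcing the denominators $q-|p|+1$ and $|q|-|p|-1$ to be positive). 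Two cosmetic points: your symmetry reduction $p\mapsto-p$ only gives $p\geqslant0$, not $p\geqslant1$ (the case $p=0$, $q<0$ does occur, but your $q<0$ computation covers $|p|=0$ verbatim since it never uses $b>a$ strictly); and when you quote "continuity in $p$" for $\Delta=0$ you should rely on the explicit double-root summation you already performed rather than on continuity of the infinite sums, which would itself need a uniformity argument.
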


On the other hand, if $\Delta< 0$, then $$|\alpha_i|\leqslant\frac{2q|y_1^{i+1}|}{|\Delta^{1/2}|}=\frac{2q^{-(i-1)/2}}{(4q-p^2)^{1/2}}\quad\text{and}\quad|\beta_i|\leqslant
\frac{2|y_1^i|}{|\Delta^{1/2}|}=\frac{2q^{-i/2}}{(4q-p^2)^{1/2}}.$$
The upper bounds of $\tilde{\alpha}, \tilde{\beta}$ are estimated by:
\begin{eqnarray*}
\tilde{\alpha}  & \leqslant & \sum_{i=1}^{n-1}|\alpha_i|+
\frac{2q^{-(n-1)/2}}{(1-q^{-1/2})(4q-p^2)^{1/2}}, \\
\tilde{\beta}  &\leqslant & \sum_{i=1}^{n-1}|\beta_i|+
\frac{2q^{-n/2}}{(1-q^{-1/2})(4q-p^2)^{1/2}}.
\end{eqnarray*}
We can find very accurate upper bounds of $\tilde{\alpha}$ and $ \tilde{\beta}$ by taking proper $n$. This is the most important tool in our proofs.

Let $L:=\{\gamma \mathbf{v}+\delta A\mathbf{v}: \gamma,\delta\in {\mathbb{Z}}\}$ be the \emph{lattice} generated by $\{\mathbf{v},A\mathbf{v}\}$. For $l\in L\setminus\{\mathbf{0}\},~ T+l$ is called a \emph{neighbor} of $T$ if $T\cap(T+l)\ne \emptyset.$ It is easy to see that $T+l$ is a neighbor of $T$ if and only if $l$ can be expressed as
$$l=\sum_{i=1}^{\infty}b_iA^{-i}\mathbf{v}\in T-T, ~\text{where}~ b_i\in \triangle D.$$
Suppose $T+l$ is a neighbor of $T$, where $l=\sum_{i=1}^{\infty}b_iA^{-i}\mathbf{v}:=\gamma \mathbf{v}+\delta A\mathbf{v}$, then
\begin{equation}\label{eq-gamma-delta}
|\gamma|\leqslant\max_i|b_i|\tilde \alpha \quad\text{and}\quad |\delta|\leqslant \max_i|b_i|\tilde \beta.
\end{equation}

Multiplying $A$ on both sides of the expression of $l$ and applying $f(A)=0$,  it follows  that $T+l_1$ is also a neighbor of $T$ satisfying $l_1=Al-b_1v=\gamma_1\mathbf{v}+\delta_1 A\mathbf{v}$ with $\gamma_1=-(q\delta+b_1)$ and $\delta_1=\gamma-p\delta$.  Continuing this process, we can construct a sequence of neighbors of $T$: $\{T+l_n\}_{n=0}^{\infty}$, where $l_0=l$ and $l_n=\gamma_n \mathbf{v}+\delta_n A\mathbf{v}, n\geq 1$ by the following  neighbor-generating  formula:
\begin{equation}\label{neighbor-generating}
\left[
        \begin{array}{rr}
          \gamma_n \\
          \delta_n \\
    \end{array}
    \right]=A^n \left[
        \begin{array}{rr}
          \gamma \\
          \delta \\
    \end{array}
    \right]-\sum_{i=1}^n A^{i-1}\left[
        \begin{array}{rr}
          b_{n+1-i} \\
          0\\
    \end{array}
    \right].
\end{equation}
Moreover, $|\gamma_n|\leqslant \max_i|b_i|\tilde \alpha$ and $|\delta_n|\leqslant \max_i|b_i|\tilde \beta$ hold for any $n\geqslant 0$. Our main idea of proof in the next section  is to find contradictions with \eqref{eq-gamma-delta} by using the neighbor-generating formula \eqref{neighbor-generating}.

We remark that by using the neighbor language, Lemma \ref{e-connected prop} can be rewritten as: \emph{$T$ is connected if and only if both  $T+\mathbf{v}$  and  $T+(m+c)\mathbf{v}$ are neighbors of  $T$ for some $c\in\{2,3,\dots, |q|\}$.}

\begin{Lem}\label{lem1}
Let $T_1=T(A,{\mathcal{D}})$ and $T_2=T(-A,{\mathcal{D}})$. Then $T_1+l$ is a neighbor of $T_1$ if and only if $T_2+l$ is a neighbor of $T_2$. Hence $T_1$ is connected if and only if $T_2$ is connected.
\end{Lem}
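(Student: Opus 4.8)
The plan is to exploit the observation that the two attractors $T_1 = T(A,\mathcal D)$ and $T_2 = T(-A,\mathcal D)$ differ only through the sign of the matrix, and that in the neighbor-generating formulas all the relevant data — the digit differences $b_i \in \triangle D$, the lattice $L$ generated by $\{\mathbf v, A\mathbf v\} = \{\mathbf v, (-A)\mathbf v\}$ up to sign, and the characteristic polynomial — are essentially insensitive to this sign change. First I would recall that $T_2 + l$ is a neighbor of $T_2$ precisely when $l = \sum_{i=1}^\infty b_i(-A)^{-i}\mathbf v = \sum_{i=1}^\infty (-1)^i b_i A^{-i}\mathbf v$ with each $b_i \in \triangle D$. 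Since $\triangle D = D - D$ is symmetric about $0$ (indeed $-\triangle D = \triangle D$), the sequence $(b_i)$ ranges over admissible digit-difference sequences if and only if the sequence $((-1)^i b_i)$ does. Hence $l \in T_2 - T_2$ if and only if $l \in T_1 - T_1$, which is exactly the statement that $T_2 + l$ is a neighbor of $T_2$ iff $T_1 + l$ is a neighbor of $T_1$.

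Concretely, I would argue as follows. Suppose $T_1 + l$ is a neighbor of $T_1$, so $l = \sum_{i=1}^\infty b_i A^{-i}\mathbf v$ for some $(b_i) \in (\triangle D)^{\mathbb N}$. Put $b_i' \Let (-1)^i b_i$; then $b_i' \in \triangle D$ because $\triangle D$ is symmetric, and
$$
\sum_{i=1}^\infty b_i'(-A)^{-i}\mathbf v = \sum_{i=1}^\infty (-1)^i b_i \,(-1)^i A^{-i}\mathbf v = \sum_{i=1}^\infty b_i A^{-i}\mathbf v = l,
$$
so $l \in T_2 - T_2$, i.e. $T_2 + l$ is a neighbor of $T_2$. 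The reverse implication is identical, swapping the roles of $A$ and $-A$ (note $-(-A) = A$), so the equivalence holds.

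For the second assertion, I would invoke the neighbor reformulation of Lemma \ref{e-connected prop} stated just above: $T$ is connected if and only if both $T + \mathbf v$ and $T + (m+c)\mathbf v$ are neighbors of $T$ for some $c \in \{2,\dots,|q|\}$. This criterion refers only to which translates $T + l$ with $l \in L$ are neighbors, and $L$ is the same lattice for $A$ and $-A$; so applying the first part to $l = \mathbf v$ and to $l = (m+c)\mathbf v$ shows that the condition holds for $T_1$ exactly when it holds for $T_2$. Therefore $T_1$ is connected if and only if $T_2$ is. The argument is essentially bookkeeping, and I do not anticipate a real obstacle; the only point requiring a moment's care is the symmetry $-\triangle D = \triangle D$, which is what makes the substitution $b_i \mapsto (-1)^i b_i$ legitimate and which would fail for a general (non-symmetric) digit set were it not automatically satisfied here since $\triangle D = D - D$.
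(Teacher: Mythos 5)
Your argument is correct and is essentially the paper's own proof: the substitution $b_i\mapsto(-1)^i b_i$ (legitimate since $\triangle D=-\triangle D$) is exactly the paper's rewriting $l=\sum_i b_iA^{-i}\mathbf v=\sum_i b_{2i}(-A)^{-2i}\mathbf v+\sum_i(-b_{2i-1})(-A)^{-2i+1}\mathbf v$, and the connectedness conclusion is drawn from Lemma \ref{e-connected prop} in the same way. No gaps.
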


\begin{proof}
If $l\in T_1-T_1$, then
$$l=\sum_{i=1}^{\infty}b_iA^{-i}\mathbf{v}=\sum_{i=1}^{\infty}b_{2i}(-A)^{-2i}\mathbf{v}+\sum_{i=1}^{\infty}(-b_{2i-1})(-A)^{-2i+1}\mathbf{v}.$$
Hence $l \in T_2-T_2$  and vice versa. The second part follows from Lemma \ref{e-connected prop}.
\end{proof}

The last lemma is a special case of Theorem 1.3 in \cite{LeLu}.

\begin{Lem}\label{lem2}
Under the same assumption of Theorem \ref{mainthm}. If $p=0$, then $T$ is connected if and only if ${\mathcal D}=\{0,1,\dots, |q|-1\}{\mathbf v}$.
\end{Lem}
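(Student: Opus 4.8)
The plan is to reduce the whole question to the scalar $\mathbf v$-coefficient of a neighbor of $T$ and then invoke the crude estimate \eqref{eq-gamma-delta}. When $p=0$, Lemma \ref{evaluation} simplifies a lot: the recursion reads $q\alpha_{i+2}+\alpha_i=0$, and with $\alpha_1=-p/q=0$, $\alpha_2=(p^2-q)/q^2=-1/q$ it forces $\alpha_i=0$ for all odd $i$ and $|\alpha_{2k}|=|q|^{-k}$ for all $k\geqslant1$; hence $\tilde\alpha=\sum_{i\geqslant1}|\alpha_i|=\sum_{k\geqslant1}|q|^{-k}=\frac1{|q|-1}$. (Concretely $A^2=-qI$, so $A^{-2k}\mathbf v=(-q)^{-k}\mathbf v$ and $A^{-(2k-1)}\mathbf v=(-q)^{-k}A\mathbf v$; thus in any radix expansion the even-indexed digits contribute only to the $\mathbf v$-component and the odd-indexed ones only to the $A\mathbf v$-component.)

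The core step is then a short argument by contradiction via Lemma \ref{e-connected prop}. Suppose $T+(m+c)\mathbf v$ were a neighbor of $T$ for some $c\in\{2,3,\dots,|q|\}$; write $(m+c)\mathbf v=\sum_{i\geqslant1}b_iA^{-i}\mathbf v=\gamma\mathbf v+\delta A\mathbf v$ with all $b_i\in\triangle D$. Linear independence of $\{\mathbf v,A\mathbf v\}$ forces $\gamma=m+c$, whereas \eqref{eq-gamma-delta} gives $|\gamma|\leqslant\big(\max_i|b_i|\big)\tilde\alpha\leqslant\frac{|q|+m}{|q|-1}$, since $\max\{|x|:x\in\triangle D\}=\max D-\min D=|q|+m$. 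Thus $m+c\leqslant\frac{|q|+m}{|q|-1}$. But $(m+2)(|q|-1)-(|q|+m)=(m+1)(|q|-2)>0$, so $\frac{|q|+m}{|q|-1}<m+2\leqslant m+c$, a contradiction. Hence no $(m+c)\mathbf v$ with $c\in\{2,\dots,|q|\}$ is a neighbor of $T$, and Lemma \ref{e-connected prop} yields that $T$ is disconnected whenever $p=0$ and $m\geqslant0$.

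Since $m\geqslant0$, the digit set $\mathcal D$ is never the consecutive collinear set $\{0,1,\dots,|q|-1\}\mathbf v$, so the stated equivalence is exactly this disconnectedness; the converse implication concerns the borderline case $m=-1$, where $\mathcal D$ is consecutive collinear and $T$ is connected by the well-known theory of CC digit sets (\cite{KiLa}), which in the present situation amounts to noting that the associated one-dimensional attractor is an interval of length one, whence $\mathbf v\in T-T$ and, with $c=2$, $(m+c)\mathbf v=\mathbf v\in T-T$. In any case the assertion is a special case of Theorem 1.3 of \cite{LeLu}. I do not anticipate a genuine obstacle here: once the $p=0$ specialization of Lemma \ref{evaluation} is in hand, everything hinges on the single scalar inequality $\frac{|q|+m}{|q|-1}<m+2$, and the only mild care needed is in identifying $\max|\triangle D|$ and in the sign bookkeeping for the base $-q$ when $q<0$ (which merely turns it into an ordinary positive base and does not affect the bound).
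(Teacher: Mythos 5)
Your proof is correct, but it is worth noting that the paper does not actually prove this lemma at all: it simply observes that the statement is a special case of Theorem 1.3 in \cite{LeLu} and cites it. What you have done instead is give a short self-contained argument using exactly the machinery the paper develops for the other parts of the proof: the specialization $A^{2}=-qI$ when $p=0$, which makes $\alpha_{i}=0$ for odd $i$ and $|\alpha_{2k}|=|q|^{-k}$, hence $\tilde\alpha=\frac{1}{|q|-1}$; the coefficient bound \eqref{eq-gamma-delta} with $\max_i|b_i|\leqslant |q|+m$; and the criterion of Lemma \ref{e-connected prop}, together with the elementary inequality $(m+2)(|q|-1)-(|q|+m)=(m+1)(|q|-2)>0$, which rules out $(m+c)\mathbf v\in T-T$ for every $c\in\{2,\dots,|q|\}$ and every $m\geqslant 0$. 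This computation is sound (you correctly note that under the hypothesis $m\geqslant 0$ the equivalence in the lemma reduces to disconnectedness, since the CC digit set cannot occur), so your route buys self-containedness at the cost of a few lines, whereas the paper's citation buys brevity and also covers the genuinely consecutive case $m=-1$, which you treat only informally (via the ``interval of length one'' remark and \cite{KiLa}) but which lies outside the stated hypothesis anyway.
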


\section{Proof of the main theorem}

The proof of Theorem \ref{mainthm}, which consists of five parts, is given in this section. Parts I and II deal with the case when $m\geqslant 1$ while parts III to V the case $m=0$. In view of Lemmas \ref{lem1}, \ref{lem2} and Theorem \ref{th-LL},  it suffices to show the theorem under the assumption that $|q|\geqslant 4$ and $p\geqslant 1$.

\vspace{12pt}

\noindent  $\bullet$\;\textbf{Part I}

\noindent In this part we assume that $f(x)=x^{2}+px+q$ where $q\geqslant4$ and $p\geqslant1$. Let $D=\{0,1,2,\dots,q-2,q+m\}$ where $m\geqslant1$.
The proof here is then divided into two cases: $\Delta=p^{2}-4q\geqslant0$ and $\Delta<0$. Moreover, in Part I, we will  explain carefully how to use the neighbor-generating formula \eqref{neighbor-generating} to get contradictions, then we shall omit the details in the remaining parts for simplification.

\vspace{12pt}

\noindent  \underline{\textbf{Case A: $\Delta\geqslant 0$}}

\noindent  Since $A$ is expanding, $q\geqslant p$ by \eqref{eq.expanding}. By Lemma \ref{lem-LiuLuoXie} and \eqref{eq-gamma-delta}, for any $l\Let\gamma {\mathbf v}+\delta A{\mathbf v}\in T-T$, we have
\begin{equation}\label{eq.3.0}
|\gamma|\leqslant\dfrac{(q+m)(p-1)}{q-p+1}, \quad |\delta|\leqslant\dfrac{q+m}{q-p+1}.
\end{equation}
Now suppose
\begin{equation}\label{eq.3.1}
(m+c)\mathbf{v}=\sum_{i=1}^{\infty}b_{i}A^{-i}\mathbf{v}\in T-T,\mbox{\; where \;} c\geqslant2, \;   b_i\in \triangle D.
\end{equation}
By multiplying $A$, then
\[
(m+c)A\mathbf{v}-b_{1}\mathbf{v}\in T-T.
\]
If $(m+c)p-c<(m+c-1)q$, then $m+c>\dfrac{q+m}{q-p+1}$ contradicting \eqref{eq.3.0}. Hence $(m+c)\mathbf{v}\notin T-T$.  If
\begin{equation}\label{eq.3.2}
(m+c-1)q\leqslant(m+c)p-c.
\end{equation}
By multiplying $A$ again,  we have
\begin{equation*}
(m+c)A^{2}\mathbf{v}-b_{1}A\mathbf{v}-b_{2}\mathbf{v}\in T-T.
\end{equation*}
Applying $f(A)=A^2+pA+qI=0$ to reduce the left term above, it follows that
\begin{equation}\label{eq.3.3}
-[p(m+c)+b_{1}]A\mathbf{v}-[q(m+c)+b_{2}]\mathbf{v}\in T-T.
\end{equation}
By \eqref{eq.3.0}, then
\begin{equation}\label{eq.3.4}
|p(m+c)+b_{1}|\leqslant\frac{q+m}{q-p+1}.
\end{equation}
When $q=p$, \eqref{eq.3.4}  becomes $|q(m+c)+b_{1}|\leqslant q+m$. So
\begin{align*}
b_{1} \leqslant q+m-q(m+c)=-q-m(q-1)-q(c-2) <-q-m,
\end{align*}
which implies $b_{1}\notin\triangle D$ and thus $(m+c)\mathbf{v}\notin T-T$. When $q\geqslant p+1$, we can deduce from \eqref{eq.3.4}  that $|p(m+c)+b_{1}|\leqslant\dfrac{q+m}{2}$.
So
\begin{align*}
b_{1} & \leqslant\frac{q+m}{2}-p(m+c)\\
 & \leqslant\frac{q+m}{2}-q(m+c-1)-c\mbox{\quad (by \eqref{eq.3.2})}\\
 & =-q-q\left(m+c-\frac{5}{2}\right)+\frac{m}{2}-c\\
 & \leqslant-q-4\left(m+c-\frac{5}{2}\right)+\frac{m}{2}-c\\
 & =-q-\frac{7m}{2}-5(c-2)\\
 & <-q-m,
\end{align*}
implying $b_{1}\notin\triangle D$. Thus $(m+c)\mathbf{v}\notin T-T$ and $T$ is disconnected by Lemma \ref{e-connected prop}.

\vspace{12pt}

\noindent\underline{\textbf{Case B: $\Delta < 0$}}

\noindent Since the conditions $\Delta<0$ and $q=p$ imply $q<4$, we  only need to consider the case that $q\geqslant p+1$. When $q=p+1$,
there are two possibilities: $(p,q)=(3,4)$ or $(4,5)$.

\vspace{12pt}

In the case that $(p,q)=(3,4)$, we can estimate the upper bound $\tilde{\beta}<0.56$ by using the formulas in Section 2. Then $|\delta|<0.56(4+m)$. The assumption of \eqref{eq.3.1} is invalid because for any $m\geqslant1$ and any $c\geqslant2$ we have $0.56(4+m)<m+c$. Hence $(m+c)\mathbf{v}\notin T-T$.

In the case that $(p,q)=(4,5)$, we find $\tilde{\beta}<0.6$. We can deduce from \eqref{eq.3.3} that
$$-(4(m+c)+b_{1})A\mathbf{v}-(5(m+c)+b_{2})\mathbf{v}\in T-T.$$
Then $|4(m+c)+b_{1}|<0.6(5+m)$ by \eqref{eq.3.0}. So $b_1<-5-m$ as $c\geqslant2$, implying $b_1\notin \triangle D$. Hence $(m+c)\mathbf{v}\notin T-T$.

\vspace{12pt}

When $q=p+2$, the possible $(p,q)$'s are: $(2,4),(3,5),(4,6)$ and $(5,7)$. In the case that $(p,q)=(2,4)$, we calculate $\tilde{\beta}<0.5$ and so $|\delta|<0.5(4+m)$. Under the assumption of \eqref{eq.3.1}, we get $(m+c)A\mathbf{v}-b_{1}\mathbf{v}\in T-T$ as before. While $0.5(4+m)<m+c$ holds for any $m\geqslant1$ and $c\geqslant2$, contradicting \eqref{eq.3.0}. Hence $(m+c)\mathbf{v}\notin T-T$. Similarly we can do the other three cases by the upper bounds of $\tilde{\beta}$ to be $0.4, 0.4, 0.34$, respectively. We can also get $(m+c)\mathbf{v}\notin T-T$.

\vspace{12pt}

When $q\geqslant p+3$, we can obtain
\begin{equation}\label{eq3.6}
|\delta|\leqslant\frac{q+m}{q-p-1}
\end{equation}
and
\begin{equation}\label{eq3.7}
|\gamma|\leqslant\begin{cases}
\dfrac{(q+m)(p-1)}{q-p-1} & \text{if $ p^{2}\geqslant q$}\\
\dfrac{(q+m)(qp-2p^{2}+q)}{q(q-p-1)} & \text{if $ p^{2}< q$.}
\end{cases}
\end{equation}
In order to derive \eqref{eq3.6} and \eqref{eq3.7}, it suffices to show
$$\tilde{\alpha}\leqslant\dfrac{p-1}{q-p-1}\quad \text{and}\quad \tilde{\beta}\leqslant\dfrac{1}{q-p-1}.$$
Indeed, by Lemma \ref{evaluation}, we have $q\beta_{i+2} + p\beta_{i+1}+\beta_{i}=0$, and hence $|q| |\beta_{i+2}| \leq |p||\beta_{i+1}| + |\beta_{i}|$. Summing the inequality for $i=1,2,3\ldots$, we get $ |q|(\widetilde{\beta}-|\beta_{1}| - |\beta_{2}|) \leq |p|(\widetilde{\beta}-|\beta_{1}|)+\widetilde{\beta}$. Note that $|\beta_{1}| = 1/q, |\beta_{2}| = p/q^2$. It follows that
$$
(|q|-|p|-1)\widetilde{\beta} \leq (|q|-|p|)|\beta_{1}| + |q| |\beta_{2}| =1.
$$
Using a similar method, we obtain $(q-p-1)\tilde{\alpha}\leqslant(q-p)|\alpha_{1}|+q|\alpha_{2}|$. By substituting $\alpha_{1}={-p}/{q}$ and $\alpha_{2}=(p^{2}-q)/{q^{2}}$ into the inequality, we will get the desired upper bound for $\tilde{\alpha}$.

\vspace{12pt}

\noindent (a) $p^{2}\geqslant q$.

If $2p<q$, then $\dfrac{p-1}{q-p-1}<1$. It follows from \eqref{eq.3.3} and \eqref{eq3.7} that $q(m+c)+b_{2}<q+m$. So $b_{2}<-m(q-1)-q(c-1)<-m-q$, implying $b_2\notin \Delta D$. Hence $m+c\notin T-T$. If $q\leqslant 2p$. Notice first that
$0<\dfrac{p^{2}}{4}<q\leqslant2p$ and $q\geqslant p+3$, hence the possible $(p,q)$'s are
$$
(3,6),(4,7),(4,8),(5,8),(5,9),(5,10),(6,10),(6,11),(6,12),(7,13),(7,14).
$$

By calculating the upper bounds of $\tilde{\beta}$'s one by one carefully, we can take $\tilde{\beta}< 0.3$ for the first five $(p,q)$'s and $\tilde{\beta}\leqslant 0.2$ for the last six ones. Moreover, for each case $|\delta|< m+c$ always holds. Hence  $(m+c)\mathbf{v}\notin T-T$.

\vspace{12pt}

\noindent (b) $p^{2}<q$.

\vspace{12pt}

(i) $p=1$.  From \eqref{eq3.6}, we have
\[
|\delta|\leqslant\frac{q+m}{q-2}\leqslant 1+\frac{m+2}{2}<m+c.
\]
So $(m+c)\mathbf{v}\notin T-T$.

\vspace{12pt}

(ii) $p=2$.  From \eqref{eq3.7}, we have
\[
|\gamma|\leqslant\frac{(q+m)(3q-8)}{q(q-3)}=3+\frac{3m+1}{q-3}-\frac{8m}{q(q-3)}<3+3m.
\]
Hence $|q(m+c)+b_{2}|<3+3m$ and thereby $b_{2}<3-qc-m(q-3)<-q-m$, implying $b_2\notin \Delta D$. So $(m+c)\mathbf{v}\notin T-T$.

\vspace{12pt}

(iii) $p\geqslant3$.  From \eqref{eq3.7}, we have
\begin{align*}
|\gamma| & \leqslant\frac{(q+m)\left[q(p+1)-2p^{2}\right]}{q(q-p-1)} =\frac{(p+1)q^{2}+\left[(p+1)m-2p^{2}\right]q-2p^{2}m}{q(q-p-1)}\\
 & =\frac{(p+1)q^{2}-(p+1)^{2}q+\left[(p+1)m-2p^{2}+(p+1)^{2}\right]q-2p^{2}m}{q(q-p-1)}\\
 & =p+1+\frac{(p+1)m-p^{2}+2p+1}{q-p-1}-\frac{2p^{2}m}{q(q-p-1)}\\
 & <p+1+\frac{(p+1)m+2(p+1)}{q-p-1}\\
 & <p+1+m+2 \quad \mbox{(as \ensuremath{q>p^{2}>2p+2} for \ensuremath{p\geqslant3}, ie. \ensuremath{p+1<q-(p+1)})}\\
 & \leqslant q+m.
\end{align*}
Hence $|q(m+c)+b_{2}|\leqslant q+m$ and thereby $b_{2}\leqslant-m(q-1)-q(c-1)<-m-q$, implying $b_2\notin \Delta D$. So $(m+c)\mathbf{v}\notin T-T$. We prove that $T$ is disconnected by Lemma \ref{e-connected prop}.

\vspace{12pt}

\noindent $\bullet$\;\textbf{Part II}

\noindent In this part we assume that $f(x)=x^{2}+px-q$, where $q\geqslant4$. Since $A$ is expanding, $p\leqslant q-2$ by \eqref{eq.expanding}. We also see that $\Delta=p^{2}+4q\geqslant0$ always holds. Let $D=\{0,1,2,\dots,q-2,q+m\}$ where $m\geqslant1$.  Note that
\[
\tilde{\alpha}=\frac{p+1}{q-p-1},\quad \tilde{\beta}=\frac{1}{q-p-1}.
\]
Hence
\[
|\gamma|\leqslant\frac{(q+m)(p+1)}{q-p-1},\quad  |\delta|\leqslant\frac{q+m}{q-p-1}.
\]

Assume $(m+c)\mathbf{v}=\sum_{i=1}^{\infty}b_{i}A^{-i}\mathbf{v}\in T-T$ where $c\geqslant 2$. So $(m+c)A\mathbf{v}-b_{1}\mathbf{v}\in T-T$. If  $p(m+c)+(2m+c)<q(m+c-1)$ then $|\delta|\leq\dfrac{q+m}{q-p-1}<m+c$, impossible. Thus $(m+c)\mathbf{v}\notin T-T$. On the other hand, we consider the case
\begin{equation}\label{eq-ineq}
q(m+c-1)\leqslant p(m+2)+(2m+c).
\end{equation}
Analogously to Part I, by using the neighbor-generating formula \eqref{neighbor-generating}, we can deduce from $(m+c)\mathbf{v}\in T-T$ that
\begin{equation*}
-[(m+c)p+b_{1}]A\mathbf{v}+[(m+c)q-b_{2}]\mathbf{v}\in T-T.
\end{equation*}
Thus
\[
|(m+c)p+b_{1}|\leqslant\frac{q+m}{q-p-1} \quad\mbox{and}\quad |(m+c)q-b_{2}|\leqslant\frac{(q+m)(p+1)}{q-p-1}.
\]
There are two subcases to be considered here: (a) $q=p+2$ and (b) $q\geqslant p+3$.

\vspace{12pt}

\noindent (a) In this case $|\gamma|\leqslant(q+m)(p+1)=(q+m)(q-1)$ and $|\delta|\leqslant q+m$. Since $(m+c)\mathbf{v}=\sum_{i=1}^{\infty}b_{i}A^{-i}\mathbf{v}\in T-T$, it follows from \eqref{neighbor-generating} that
\begin{equation}\label{eq3.8}
[(m+c)(p^{2}+q)+pb_{1}-b_{2}]A\mathbf{v}-[qp(m+c)+qb_{1}+b_{3}]\mathbf{v}\in T-T.
\end{equation}
As $|(m+c)(p^{2}+q)+pb_{1}-b_{2}|\leqslant q+m$, so
\begin{align*}
pb_{1} & \leqslant q+m-(m+c)(p^{2}+q)+b_{2}\\
 & \leqslant q+m-(m+c)p^{2}-(m+c)q+q+m\\
 & \leqslant -mq-(m+2)p^{2}+2m.
\end{align*}
Thus for any $q\geqslant4$ we have
\begin{equation*}
b_{1}  \leqslant\frac{-mq}{p}-(m+2)p+\frac{2m}{p} =\frac{-mq}{q-2}-(m+2)(q-2)+\frac{2m}{q-2} <-q-m,
\end{equation*}
implying $b_1\notin \Delta D$. Hence $(m+c)\mathbf{v}\notin T-T$ for all integers $c\geqslant 2$.

\vspace{12pt}

\noindent (b) In this case we have
\[
|\gamma|\leqslant\frac{(q+m)(p+1)}{q-p-1}\leqslant\frac{(q+m)(p+1)}{2}\quad\mbox{and}\quad |\delta|\leqslant\frac{q+m}{q-p-1}\leqslant\frac{q+m}{2}.
\]
From \eqref{eq3.8}, we have $|(m+c)(p^{2}+q)+pb_{1}-b_{2}|\leqslant\dfrac{q+m}{2}$. So
\begin{align*}
pb_{1} &\leqslant \frac{q+m}{2}-(m+c)(p^2+q)+b_2 \\
       &\leqslant \frac{q+m}{2}-(m+2)(p^2+q)+b_2 \\
       &\leqslant -\left(m+\frac{1}{2}\right)q-(m+2)p^{2}+\frac{3m}{2}\quad \mbox{(as \ensuremath{b_{2}\leqslant q+m})}.
\end{align*}
Hence for $m\geqslant1$ and $q\geqslant4$ we have
\begin{align*}
b_{1} & \leqslant\frac{-\left(m+\frac{1}{2}\right)(p+3)}{p}-(m+2)p+\frac{3m}{2p}\\
 & \leqslant-\left(m+\frac{1}{2}\right)-\frac{3\left(m+\frac{1}{2}\right)}{p}-(m+1)q+2(m+1)+\frac{3m}{2p}   \qquad \text{ by } \eqref{eq-ineq}\\
 & =m+\frac{3}{2}-\frac{3m+1}{2p}-(m+1)q\\
 & <m+\frac{3}{2}-(m+1)q\\
 & <-q-m,
\end{align*}
implying $b_1\notin \Delta D$. Therefore,  $(m+c)\mathbf{v}\notin T-T$ for all integers $c\geqslant 2$.

\noindent \vspace{12pt}

\noindent $\bullet$\;\textbf{Part III}

\noindent In this part we assume that $f(x)=x^{2}+px+q$ with $\Delta=p^{2}-4q\geqslant0$, where $1\leqslant p\leqslant q$ and $q\geqslant4$. Let $D=\{0,1,\dots,q-2,q\}$, then $\triangle D=\{0,\pm1,\dots, \pm(q-1),\pm q\}$. Here we consider the following cases one by one:
\begin{enumerate}
\item[(a)] $q=2p-2$ and $p\geqslant3$
\item[(b)] $q>2p-2$
\item[(c)] $q=p\geqslant4$
\item[(d)] $2p-2>q>p\geqslant3$
\end{enumerate}

\noindent (a)  Notice first that $0=f(A)=A^{2}+pA+qI=(A+I)[A+(p-1)I]+(p-1)I$.
So $A+(p-1)I=-(p-1)(A+I)^{-1}$ and then
\[
A\mathbf{v}+(p-1)\mathbf{v}=-(p-1)(A^{-1}-A^{-2}+A^{-3}-\cdots)\mathbf{v}.
\]
Hence
\begin{equation}\label{eq-iden}
\mathbf{v}=-(p-1)A^{-1}\mathbf{v}+\sum_{i=2}^{\infty}(-1)^{i-1}(p-1)A^{-i}\mathbf{v},
\end{equation}
which implies that $\mathbf{v}\in T-T$. Moreover, by $q=2(p-1)$, we get
$$2\mathbf{v}=-qA^{-1}\mathbf{v}+\sum_{i=2}^{\infty}(-1)^{i-1}qA^{-i}\mathbf{v}\in T-T.$$
Hence $T$ is connected by Lemma \ref{e-connected prop}. (See Figure \ref{fig2}(a))

\vspace{12pt}

\noindent (b)  By Lemma \ref{lem-LiuLuoXie},  then $\tilde{\alpha}=\dfrac{p-1}{q-p+1}$ and $\tilde{\beta}=\dfrac{1}{q-p+1}$.
Also $|\gamma|\leqslant\dfrac{q(p-1)}{q-p+1}$ and $|\delta|\leqslant\dfrac{q}{q-p+1}<2$.

Suppose $2\mathbf{v}=\sum_{i=1}^\infty b_i A^{-i}{\mathbf v}\in T-T$. Then $2A\mathbf{v}-b_{1}\mathbf{v}=\sum_{i=2}^\infty b_i A^{-i}{\mathbf v}\in T-T$. This is impossible as $|\delta|<2$. Hence $2\mathbf{v}\notin T-T$. Similarly we can show that $k\mathbf{v}\notin T-T$ for $k>2$. Hence $T$ is disconnected by Lemma \ref{e-connected prop}.

\vspace{12pt}

\noindent (c)  In this case $|\gamma|\leqslant q(q-1)$ and $|\delta|\leqslant q$. Suppose $k\mathbf{v}=\sum_{i=1}^{\infty}b_{i}A^{-i}\mathbf{v}\in T-T$ where $k\geqslant 2$. By \eqref{neighbor-generating}, then
\begin{equation*}
 -(kq+b_{1})A\mathbf{v}-(kq+b_{2})\mathbf{v}\in T-T.
\end{equation*}
$|kq+b_{1}|\leqslant q$ is only possible when $k=2$. In this case $b_{1}=-q$, so  $-qA\mathbf{v}-(2q+b_{2})\mathbf{v}\in T-T$. By using \eqref{neighbor-generating} again, we have
\begin{equation*}
(q^{2}-2q-b_{2})A\mathbf{v}+(q^{2}-b_{3})\mathbf{v}\in T-T.
\end{equation*}
Since $|q^{2}-2q-b_{2}|\leqslant q$ and $|q^{2}-b_{3}|\leqslant q(q-1)$, we have $q^{2}-q\geqslant b_{2}\geqslant q^{2}-3q=q(q-3)\geqslant q$,
where the last equality holds when $q=4$. Thus if $q=p\geqslant5$, then $b_{2}>q$, impossible. So $2\mathbf{v}\notin T-T$.

If $q=p=4,$ then by $0=f(A)(A-I)=A^{3}+3A^{2}-4I$ we have $A^{2}+2A=2I=2(A+I)^{-1}$.
It follows that
$$v=-2A^{-1}v+2A^{-2}v+2A^{-3}v-2A^{-4}v+2A^{-5}v-2A^{-6}v+\cdots\in T-T$$
and
$$2v=-4A^{-1}v+4A^{-2}v+4A^{-3}v-4A^{-4}v+4A^{-5}v-4A^{-6}v+\cdots\in T-T.$$
Hence $T$ is connected by Lemma \ref{e-connected prop}.

\vspace{12pt}

\noindent (d)  Let $q-p=r\geqslant1$. In this case $|\gamma|\leqslant\dfrac{q(q-r-1)}{r+1}$
and $|\delta|\leqslant\dfrac{q}{r+1}$. Suppose $2\mathbf{v}=\sum_{i=1}^{\infty}b_{i}A^{-i}\mathbf{v}\in T-T$. By \eqref{neighbor-generating}, then
\begin{equation*}
-(2p+b_{1})A\mathbf{v}-(2q+b_{2})\mathbf{v}\in T-T.
\end{equation*}
Since $|2p+b_{1}|\leqslant\dfrac{q}{r+1}$ and $|2q+b_{2}|\leqslant\dfrac{q(q-r-1)}{r+1}$,
we have $-\dfrac{q}{r+1}\leqslant2(q-r)+b_{1}\leqslant\dfrac{q}{r+1}$.
Thus
$$b_{1}\leq\frac{q}{r+1}-2(q-m)=\frac{-(2m+1)q}{m+1}+2m<-q,$$
as $2(r+1)<q$ (ie. $q<2p-2$). That is impossible, hence  $2\mathbf{v}\notin T-T$.

Now suppose $k\mathbf{v}=\sum_{i=1}^{\infty}b_{i}A^{-i}\mathbf{v}\in T-T$ for $k\geqslant3$. As $|kp+b_{1}|\leqslant\dfrac{q}{r+1}$, we have
\begin{align*}
b_{1} & \leqslant\frac{q}{r+1}-kp =\frac{q}{r+1}-k(q-r)\\
 & <\frac{q}{2}-kq+k\left(\frac{q}{2}-1\right)\mbox{\;(as \ensuremath{q>2(r+1)})}\\
 & \leqslant-q-k <-q,
\end{align*}
impossible. So $k\mathbf{v}\notin T-T$ which implies that $T$ is disconnected by Lemma \ref{e-connected prop}. (See Figure \ref{fig2}(b))

\vspace{12pt}

\noindent $\bullet$\;\textbf{Part IV}

\noindent In this part we assume that $f(x)=x^{2}+px+q$ with $\Delta=p^{2}-4q<0$, where $q>p\geqslant0$ and $q\geqslant4$. Let $D=\{0,1,2,\dots,q-2,q\}$, then $\triangle D=\{0,\pm1,\dots, \pm(q-1),\pm q\}$. The proof of this part is divided into the following three cases:
$$(a)\; q=p+1; \quad (b)\; q=p+2;  \quad \text{and}\quad (c)\; q>p+2.$$

\noindent (a)  Notice that $0>p^{2}-4(p+1)=(p-2)^{2}-8$. Now $p=1,2,3,4$. The corresponding $q$'s are $2,3,4,5$. As the cases $(p,q)=(1,2)$
and $(2,3)$ have been solved, we need only study $(p,q)=(3,4)$ and $(4,5)$.

When $(p,q)=(3,4)$, we can deduce from $0=f(A)=A^{2}+3A+4I$ that $A+2I=-2(A+I)^{-1}$. It in turn implies that
$$v=-2A^{-1}-2A^{-2}+2A^{-3}-2A^{-4}+2A^{-5}-2A^{-6}+\cdots\in T-T$$
and
$$2v=-4A^{-1}-4A^{-2}+4A^{-3}-4A^{-4}+4A^{-5}-4A^{-6}+\cdots\in T-T.$$  Hence $T$ is connected.

When $(p,q)=(4,5)$, we can calculate the upper bounds $\tilde{\beta}<0.6$ and $|\delta|<3$ by using the formulas in Section 2. Now suppose $k\mathbf{v}=\sum_{i=1}^{\infty}b_{i}A^{-i}\mathbf{v}\in T-T$ where $k\geqslant 2$. By \eqref{neighbor-generating}, then
\begin{align*}
-(4k+b_{1})A\mathbf{v}-(5k+b_{2})\mathbf{v}\in T-T,
\end{align*}
in which $|\delta|=|4k+b_{1}|<3$, but it is impossible for $k\geqslant 2$ and  $|b_{1}|\leqslant 5$.  Hence  $k\mathbf{v}\notin T-T$, and $T$ is not connected. (See Figure \ref{fig2}(b))

\vspace{12pt}

\noindent (b)  $\Delta=p^{2}-4q<0$ gives $0>p^{2}-4(p+2)=(p-2)^{2}-12$.  Thus $p=2,3,4,5$ and so $q=4,5,6,7$, respectively. We now discuss them in detail.

In the case that $(p,q)=(2,4)$,  we get the upper bounds $\tilde{\beta}<0.43$ and $|\delta|<1.72$. Now suppose $k\mathbf{v}=\sum_{i=1}^{\infty}b_{i}A^{-i}\mathbf{v}\in T-T$. Then $kA\mathbf{v}-b_{1}\mathbf{v}\in T-T$, while $|\delta|< 2$, a contradiction.  Thus $k\mathbf{v}\notin T-T$. Similarly for the case $(p,q)=(3,5)$, we get $\tilde{\beta}<0.38$, and $|\delta|<1.9<2$. Hence $k\mathbf{v}\notin T-T$.

\vspace{12pt}

For the case $(p,q)=(4,6)$, it follows from $0=f(A)=A^{2}+4A+6I$ that $A+3I=-3(A+I)^{-1}$. Then we have
$$v=-3A^{-1}v-3A^{-2}v+3A^{-3}v-3A^{-4}v+3A^{-5}v-\cdots\in T-T$$
and
$$2v=-6A^{-1}v-6A^{-2}v+6A^{-3}v-6A^{-4}v+6A^{-5}v-\cdots\in T-T.$$
So $T$ is connected by Lemma \ref{e-connected prop}. (See Figure \ref{fig2}(a))

\vspace{12pt}

For the case $(p,q)=(5,7)$,  we can get $\tilde{\beta}<0.4$ and $|\delta|<2.8$. Now suppose $k\mathbf{v}=\sum_{i=1}^{\infty}b_{i}A^{-i}\mathbf{v}\in T-T$. Then $kA\mathbf{v}-b_{1}\mathbf{v}\in T-T$. By \eqref{neighbor-generating}, so
\begin{align*}
 -(5k+b_{1})A\mathbf{v}-(7k+b_{2})\mathbf{v}\in T-T.
\end{align*}
Since $|5k+b_{1}|<2.8$, we have $-12.8<b_{1}<-7.2$, then $b_1\notin \triangle D$. Thus $k\mathbf{v}\notin T-T$.

\vspace{12pt}

\noindent (c)  Notice that
\begin{gather}\label{eq-estimate}
|\delta|\leqslant\frac{q}{q-p-1}\mbox{\;\;and\;\;}|\gamma|\leqslant\begin{cases}
\dfrac{q(p-1)}{q-p-1} & \mbox{\;if \ensuremath{p^{2}-q\geqslant0}}\\
\dfrac{qp-2p^{2}+q}{q-p-1} & \mbox{\;if \ensuremath{p^{2}-q<0}.}
\end{cases}
\end{gather}
Now suppose  $k\mathbf{v}=\sum_{i=1}^{\infty}b_{i}A^{-i}\mathbf{v}\in T-T$. As before, by \eqref{neighbor-generating}, then
\begin{equation}\label{eq-2v}
  -(kp+b_{1})A\mathbf{v}-(kq+b_{2})\mathbf{v}\in T-T.
\end{equation}
Since $q>p+2$, from \eqref{eq-estimate}, we obtain
$$
|kp+b_{1}|\leqslant \frac{q}{q-p-1}\leqslant\frac{q}{2}.
$$
Hence $k\mathbf{v}\notin T-T$ provided that $q< \dfrac{4p}{3}$. For the case $q\geqslant\dfrac{4p}{3}$. Here we consider two subcases:
(i) $p^{2}\geqslant q$ and  (ii) $p^{2}< q$.

\vspace{12pt}

(i)  From \eqref{eq-estimate}, it follows that $|kq+b_{2}|\leqslant\dfrac{q(p-1)}{q-p-1}$. If $q>2p$ then $b_{2}\leqslant\dfrac{q(p-1)}{q-p-1}-2q<-q$. Thus $k\mathbf{v}\notin T-T$ and $T$ is disconnected. If $2p\geqslant q\geqslant\dfrac{4p}{3}$, we can find all possible $(p,q)$'s  as follows: $$(2,4),(3,4),(3,6),(4,6),(4,7),(4,8),(5,7),(5,8),$$
$$(5,9),(5,10),(6,10),(6,11),(6,12),(7,13),(7,14).$$
Except the solved cases, we only need to study the following ones:
$$(3,6),(4,7),(4,8),(5,9),(5,10),(6,11),(6,12),(7,13),(7,14).$$

By calculating the upper bounds of $\tilde{\beta}$'s one by one carefully, we can take $\tilde{\beta}< 0.3$ for $(3,6)$; $\tilde{\beta}< 0.26$ for $(4,7)$; $\tilde{\beta}< 0.21$ for $(4,8), (5,9)$; $\tilde{\beta}< 0.17$ for $(5,10), (6,11)$; $\tilde{\beta}< 0.15$ for $(6,12), (7,13)$; $\tilde{\beta}< 0.13$ for $(7,14)$. While in each case $|\delta|<2$ always holds. Thus $k{\mathbf v}\notin T-T$.

\vspace{12pt}

(ii)  From \eqref{eq-estimate},  we have $|kq+b_{2}|\leqslant\dfrac{qp-2p^{2}+q}{q-p-1}$. So $b_{2}\leqslant\dfrac{qp-2p^{2}+q}{q-p-1}-kq\leqslant\dfrac{qp-2p^{2}+q}{q-p-1}-2q$. It can be shown that
\[
\frac{qp-2p^{2}+q}{q-p-1}-2q<-q \quad \Leftrightarrow \quad 2+2p\left(1-\frac{p}{q}\right)<q.
\]

Now for any $p\geqslant3$ we have $p^{2}-\left[2+2p\left(1-\dfrac{p}{q}\right)\right]=(p-1)^{2}-3+\dfrac{2p^{2}}{q}>0$.
So $2+2p\left(1-\dfrac{p}{q}\right)<p^{2}<q$. Thus $k{\mathbf v}\notin T-T$ and $T$ is disconnected. We then study what will happen when $p=1$ or $2$.

\vspace{12pt}

In the case that $p=1$,  $p^{2}-q=1-q<0$ and $q\geqslant\dfrac{4p}{3}=\dfrac{4}{3}$. Then $$|\gamma|\leqslant\dfrac{2q-2}{q-2}=2+\dfrac{2}{q-2}\leqslant3$$ for any $q\geqslant4$ whereas the equality holds when $q=4$. Also
$$|\delta|\leqslant\dfrac{q}{q-2}=1+\dfrac{2}{q-2}\leqslant2$$
whereas the equality holds when $q=4$. Now suppose $k\mathbf{v}\in T-T$. By \eqref{eq-2v}, it follows from $|kq+b_{2}|\leqslant3$ that
$b_{2}\leqslant 3-kq \leqslant3-2q<-q$, then $b_2\notin \triangle D$. Hence $k\mathbf{v}\notin T-T$ and  $T$ is disconnected.

\vspace{12pt}

In the case that $p=2$,  $p^{2}-q=4-q<0$. So $q\geqslant 5$. Then $|\gamma|\leqslant\dfrac{3q-8}{q-3}<4$
and $|\delta|\leqslant\dfrac{q}{q-3}=1+\dfrac{1}{q-3}$. Now suppose
$k\mathbf{v}\in T-T$. By \eqref{eq-2v}, then
$$
 -(2k+b_{1})A\mathbf{v}-(kq+b_{2})\mathbf{v}\in T-T.
$$
From $|kq+b_{2}|<4$, it follows that $b_{2}< 4-kq\leqslant 4-2q<-q$. Then $b_2\notin \triangle D$. Hence $k\mathbf{v}\notin T-T$ and  $T$ is disconnected.

\begin{figure}[h]
  \centering
   \subfigure[$(p,q)=(4,6)$]{
  \includegraphics[width=5cm]{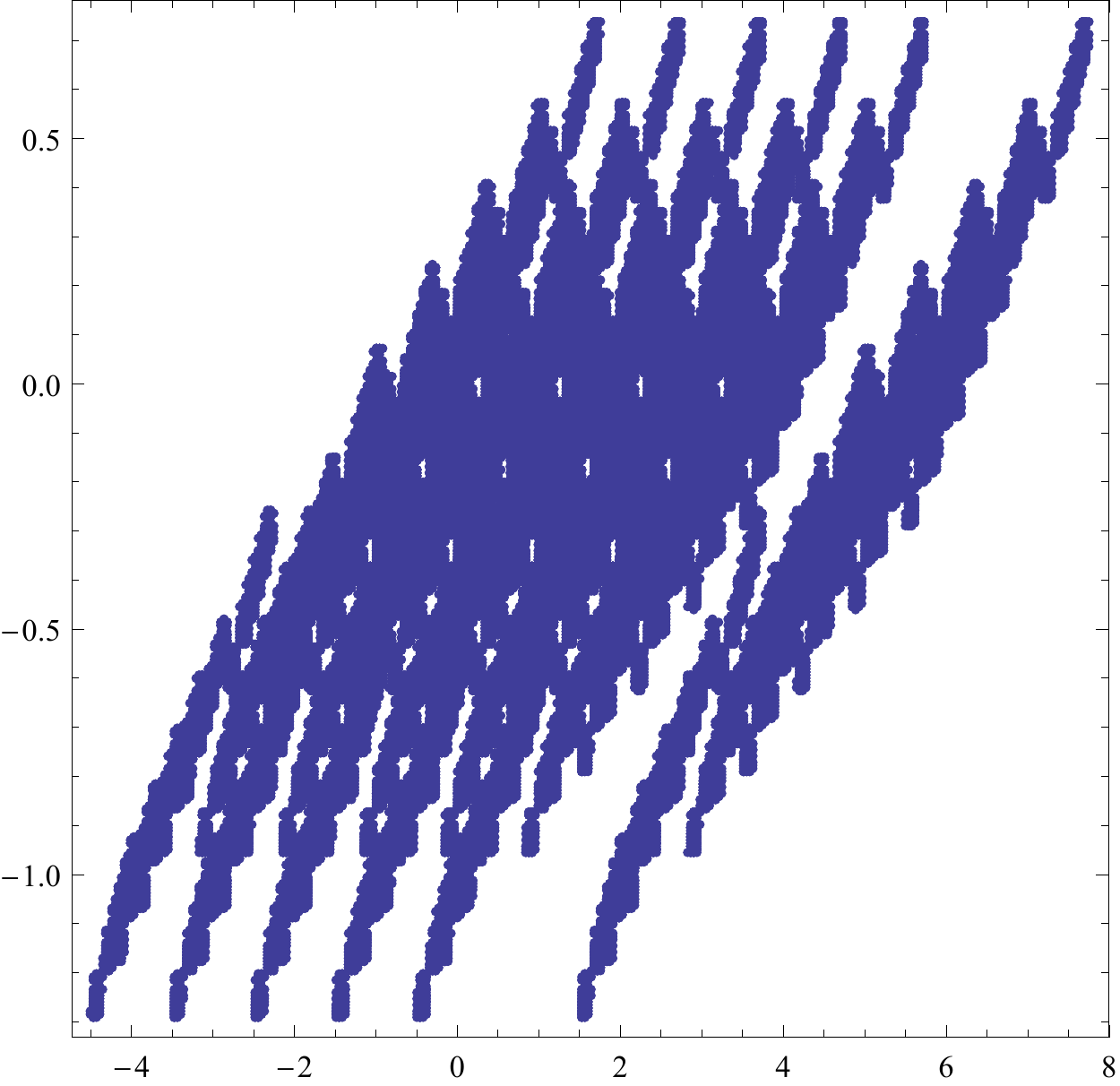}
 }
 \qquad
 \subfigure[$(p,q)=(4,5)$]{
  \includegraphics[width=5cm]{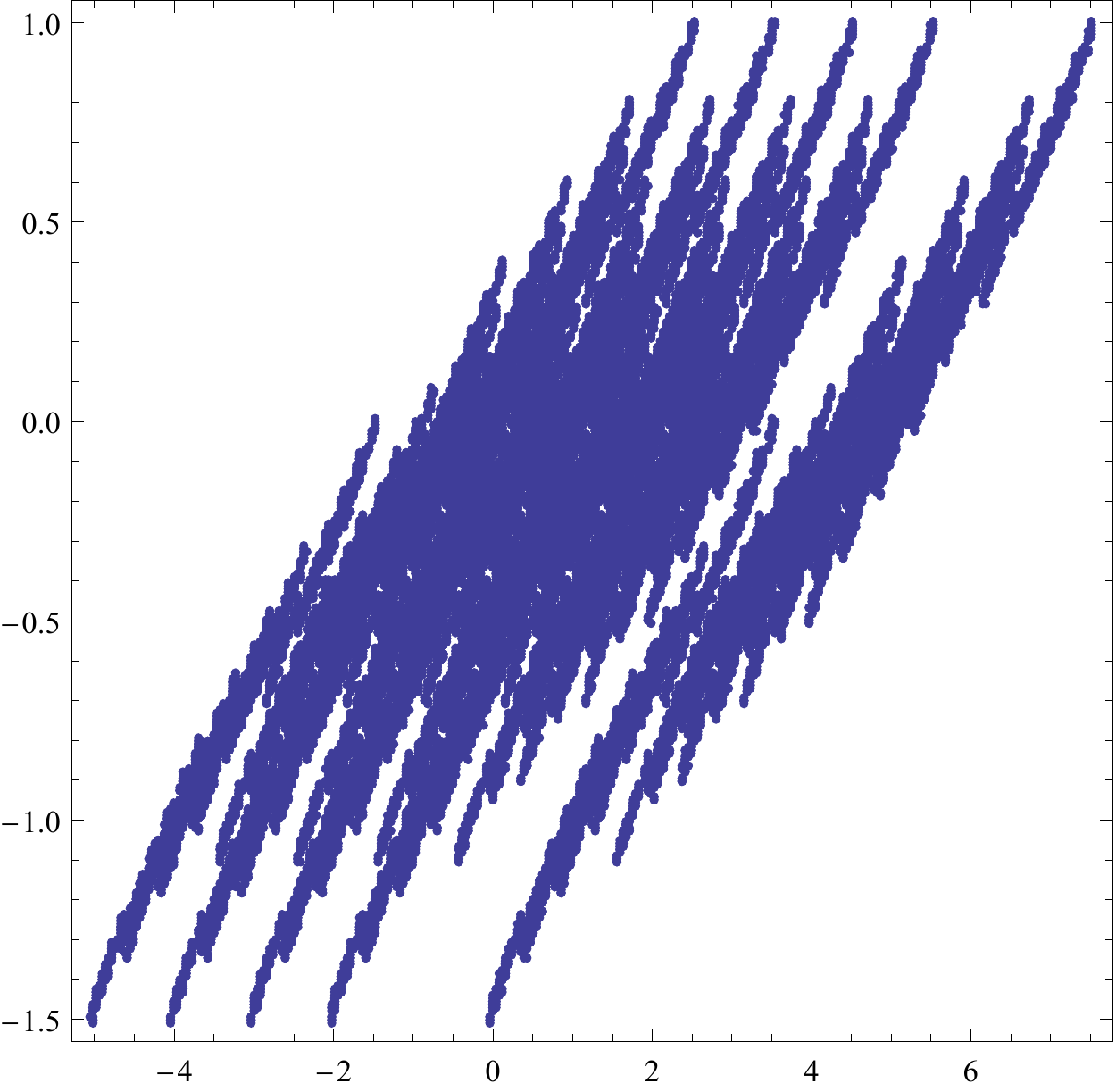}
 }
 \caption{(a) is connected while (b) is not connected.}\label{fig2}
\end{figure}

\vspace{12pt}

\noindent $\bullet$\;\textbf{Part V}

\noindent In this part we assume that $f(x)=x^{2}+px-q$, where $q>p\geqslant1$ and $q\geqslant4$. Let $D=\{0,1,2,\dots,q-2,q\}$.
As $A$ is expanding, $q\geqslant p+2$ by \eqref{eq.expanding}. Since $\tilde{\alpha}=\dfrac{p+1}{q-p-1}$ and $\tilde{\beta}=\dfrac{1}{q-p-1}$, we have
$$
|\gamma|\leqslant\frac{q(p+1)}{q-p-1} \quad\text{and}\quad |\delta|\leqslant\frac{q}{q-p-1}.
$$
Under the assumption that {$p\geqslant 1$},  we divide the proof into the following cases:
$$(a)\  q=2p+2;\quad (b) \  q>2p+2; \quad\text{and}\quad  (c) \  2p+2>q\geqslant p+2.$$

\noindent (a)  Notice that $0=f(A)=A^{2}+pA-(2p+2)I$. So
\begin{align*}
 & (p+1)I=[A+(p+1)I](A-I)\\
\Rightarrow\quad & A+(p+1)I=(p+1)(A-I)^{-1}=(p+1)\sum_{i=1}^{\infty}A^{-i}\\
\Rightarrow\quad & \mathbf{v}=-(p+1)A^{-1}\mathbf{v}+\sum_{i=1}^{\infty}(p+1)A^{-i-1}\mathbf{v}\in T-T.
\end{align*}
Moreover, $2\mathbf{v}=-qA^{-1}\mathbf{v}+\sum_{i=2}^{\infty}qA^{-i}\mathbf{v}\in T-T$. Thus $T$ is connected.

\vspace{12pt}

\noindent (b)  Notice that $q-p-1>p+1$. We get
\begin{equation}\label{eq-gamma}
|\gamma|<q
\end{equation}
Now suppose $k\mathbf{v}=\sum_{i=1}^{\infty}b_{i}A^{-i}\mathbf{v}\in T-T$ where $k\geqslant 2$. By \eqref{neighbor-generating}, then
\begin{align*}
-(kp+b_{1})A\mathbf{v}+(kq-b_{2})\mathbf{v}\in T-T.
\end{align*}
By \eqref{eq-gamma}, we know  $|kq-b_{2}|<q$. Then $|b_{2}|>(k-1)q\geqslant q$, contradicting $b_2\in \triangle D$. Thus $k\mathbf{v}\notin T-T$.

\vspace{12pt}

\noindent (c)  In this case $|\gamma|\leqslant q(p+1)$ and $|\delta|\leqslant q$. Suppose $k\mathbf{v}=\sum_{i=1}^{\infty}b_{i}A^{-i}\mathbf{v}\in T-T$. Then $-(kp+b_{1})A\mathbf{v}+(kq-b_{2})\mathbf{v}\in T-T$, which then implies that
\[
[p(kp+b_{1})+(kq-b_{2})]A\mathbf{v}-[q(kp+b_{1})+b_{3}]\mathbf{v}\in T-T.
\]
Since $|p(kp+b_{1})+(kq-b_{2})|\leqslant q$, we have $p(2p+b_{1})+q\leqslant p(kp+b_{1})+q \leqslant b_{2}$. When $q<2p$, as $-2p<-q\leqslant b_{1}$,  then $q<b_{2}$, contradicting $b_2\in \triangle D$. Thus $k\mathbf{v}\notin T-T$. When $2p\leqslant q<2p+2$, which means (i) $q=2p$ or (ii) $q=2p+1$.

\vspace{12pt}

(i)  Notice that $|p(kp+b_{1})+(kq-b_{2})|<q$. So $\left|\dfrac{q}{2}(\frac{kq}{2}+b_{1})+(kq-b_{2})\right|<q$, implying that $\dfrac{q}{2}(q+b_{1})+q\leqslant \frac{q}{2}(\frac{kq}{2}+b_1)+(k-1)q<b_{2}$. Since $b_{1}\geqslant-q$, we have $q<b_{2}$, and $b_2\notin \triangle D$. Hence $k\mathbf{v}\notin T-T$.

\vspace{12pt}

(ii) Notice that $|p(kp+b_{1})+(kq-b_{2})|\leqslant\dfrac{q}{q-p-1}=2+\dfrac{2}{q-1}$. Seeing that $\left|\dfrac{2}{q-1}\right|<1$, hence
$|p(kp+b_{1})+(kq-b_{2})|\leqslant2$. Now
\begin{align*}
 & p(2p+b_{1})+2q-b_{2}\leqslant p(kp+b_{1})+kq-b_{2}\leqslant 2\\
\Rightarrow\quad & \frac{q-1}{2}(q-1+b_{1})+2q-b_{2}\leqslant2\\
\Rightarrow\quad & \frac{(q-1)^{2}}{2}+\frac{q-1}{2}b_{1}+2q-2\leqslant b_{2}\\
\Rightarrow\quad & \frac{(q-1)^{2}}{2}-\frac{q(q-1)}{2}+2q-2\leqslant b_{2}  \quad (as\,-q\leq b_{1})\\
\Rightarrow\quad & \frac{3q}{2}-\frac{3}{2}\leqslant b_{2}\\
\Rightarrow\quad & q<b_{2}\quad \mbox{\;(as \ensuremath{q>3})},
\end{align*}
and $b_2\notin \triangle D$. Therefore, $k\mathbf{v}\notin T-T$ and $T$ is not connected.

\end{document}